\documentclass[final]{siamltex}
\usepackage{amsfonts,amsmath,amssymb}
\usepackage{mathrsfs}
\usepackage{enumerate}
\usepackage{xspace,mydef,boldfonts}
\usepackage{hyperref}
\usepackage{esint}
\usepackage{graphicx}



\newtheorem{remark}[theorem]{Remark}
\numberwithin{equation}{section}

\title{Finite element approximation of the Isaacs equation\thanks{AJS has been partially supported by NSF grant DMS-1418784. WZ has been partially supported by the Brin Postodctoral Fellowship of the University of Maryland.}}

\author{Abner J.~Salgado\thanks{Department of Mathematics, University of Tennessee, Knoxville, TN 37996, USA.
\texttt{asalgad1@utk.edu}}
\and
Wujun Zhang\thanks{Department of Mathematics, University of Maryland, College Park, MD 20742, USA.
\texttt{wujun@umd.edu}}
}

\pagestyle{myheadings}
\thispagestyle{plain}
\markboth{\textsc{\uppercase{A.J.~Salgado and W.~Zhang}}}{Isaacs equation}

\begin{document}
\maketitle

\begin{abstract}
  We propose and analyze a two-scale finite element method for the Isaacs equation. The fine scale is given by the mesh size $h$ whereas the coarse scale $\varepsilon$ is dictated by an integro-differential approximation of the partial differential equation. We show that the method satisfies the discrete maximum principle provided that the mesh is weakly acute. This, in conjunction with weak operator consistency of the finite element method, allows us to establish convergence of the numerical solution to the viscosity solution as $\varepsilon, h\to0$, and $\varepsilon \gtrsim h^{1/2}|\log h|$. In addition, using a discrete Alexandrov Bakelman Pucci estimate we deduce rates of convergence, under suitable smoothness assumptions on the exact solution.
\end{abstract}

\begin{keywords}
  Fully nonlinear equations, viscosity solution, discrete maximum principle
\end{keywords}

\begin{AMS}
65N12,     
65N15,                      
65N30,                      
35J60,                      
35D40,                      
35Q91,                      
\end{AMS}

\date{Submitted \today.}

\section{Introduction}
\label{sec:intro}

Fully nonlinear elliptic partial differential equations (PDE) arise naturally from differential geometry, optimal mass transportation, stochastic optimal control and other fields of science and engineering. In spite of their wide range of applications, the numerical methods for this type of PDEs is still under development and this is particularly the case if one wants to apply the finite element method (FEM). A major difficulty in their numerical approximation is that, for fully nonlinear PDEs, the correct notion of solution is the so-called viscosity solution, which is based on the maximum principle instead of a variational one. This is reflected in the fact that, in contrast to an extensive literature for linear and quasilinear elliptic PDEs in divergence form, the numerical approximation via finite elements of fully nonlinear PDEs reduces to a few papers; we refer the reader to \cite{FengGlowinskiNeilan13} for an overview. The situation is somewhat more satisfactory for finite difference approximations, where convergence to the viscosity solution without rates was studied in the early works \cite{BarlesSouganidis91, KuoTrudinger92}. Rates of convergence for convex/concave fully nonlinear elliptic equations have been established, \eg by Krylov \cite{Krylov97, Krylov05}; Barles and Jakobsen \cite{BarlesJakobsen05} and Debrabant and Jakobsen \cite{MR3042570}. However, rates of convergence for the nonconvex/nonconcave case remained an open problem until Caffarelli and Souganidis \cite{CaffarelliSouganidis08} established a rate of convergence for elliptic equations of the form
\[
F[\D^2u](x) = f(x),
\]
with Dirichlet boundary conditions. This result was extendend by Turanova \cite{MR3412399} to the case where $F$ is also dependent on $x$ and by Krylov \cite{MR3355497} to the Isaacs equation, instead of a general nonconvex fully nonlinear PDE, but he allows the operator $F$ to depend on $x$, $\GRAD u$ and $u$.

In all the works mentioned above, convergence to the viscosity solution hinges on operator consistency and monotonicity, which are nontrivial properties to be satisfied, especially in the FEM. To overcome this, the recent work \cite{RHNWZ} introduced a two scale FEM for linear elliptic PDEs in nondivergence form and, on the basis of monotonicity and \emph{weak} operator consistency, the authors were able to prove convergence of the method with rates.
The purpose of this work is to extend these ideas to the case of fully nonlinear elliptic PDEs. We consider, as an example, the Isaacs equation
\begin{equation}
\label{eq:isaacs}
  \frakF[\ue](x) := \inf_{\alpha \in \calA} \sup_{\beta \in \calB} \left[ A^{\alpha,\beta}(x): \D^2 \ue(x) \right] = \fe(x) \ \text{in } \Omega,
  \quad
  \ue = 0 \ \text{on } \partial\Omega,
\end{equation}
where $\Omega \subset \Real^d$ ($d\geq2$) is an open, bounded and convex domain, $\fe \in C^{0,1}(\bar\Omega)$, the sets $\calA$ and $\calB$ are arbitrary finite sets and the matrices $A^{\alpha,\beta} \in C^{0,1}(\bar\Omega,\GL_d(\Real))$ are symmetric and uniformly elliptic, in the sense that there are constants $\lambda,\Lambda \in \Real$, with $0<\lambda \leq \Lambda$, such that
\begin{equation}
\label{eq:ellipticity}
  \lambda I \leq A^{\alpha,\beta} \leq \Lambda I \quad \forall \alpha \in \calA, \beta \in \calB .
\end{equation}

We will introduce, following \cite{RHNWZ}, a two-scale FEM, show its convergence to the viscosity solution of \eqref{eq:isaacs} and provide rates of convergence. 
The method is based on the approximation of \eqref{eq:isaacs} proposed by Caffarelli and Silvestre in \cite{MR2667633}: we formally rewrite
\begin{equation*}
  \frakF[\ue](x) = \frac\lambda2 \LAP \ue(x) 
    + \inf_{\alpha \in \calA} \sup_{\beta \in \calB} \left[ \left( A^{\alpha,\beta}(x) -  \frac\lambda2 I \right): \D^2 \ue(x) \right]  
\end{equation*}
and we approximate the operator above by the \emph{integro-differential} operator
\begin{equation*}
  \frakF^\vare[u](x) := \frac\lambda2 \LAP u(x) 
    + \inf_{\alpha \in \calA} \sup_{\beta \in \calB} \Ie{u}(x) 
\end{equation*}
where 
\begin{equation*}
  \Ie{u}(x) = \frac1{\vare^{d+2} \det M^{\alpha,\beta}(x)} \int_{\Real^d} \frakd u(x,y) 
  \varphi\left( \frac {M^{\alpha,\beta}(x)^{-1} y}{\vare} \right) \diff y,
\end{equation*}
with
\[
  M^{\alpha,\beta}(x) := \left( A^{\alpha,\beta}(x) - \frac\lambda2 I \right)^{1/2}.
\]
Hereafter, $\varphi(y)$ is a radially symmetric smooth function with compact support in the unit ball $B_1$ of $\Real^d$, where $d\geq 1$ is the dimension, that verifies 
\[
  \int_{\mathbb R^d} |y|^2 \varphi(y) \diff y = d
\]
and 
\begin{equation}
\label{eq:defoffrakd}
  \frakd u(x,y):= u(x+y) - 2 u(x) + u (x-y) 
\end{equation}
is the centered second difference operator.
The operator $\Ie{\cdot}$ is a consistent approximation of $\left( A^{\alpha, \beta}(x) - \frac{\lambda}{2} I \right) : \D^2 u(x)$ in the sense that if $u$ is a quadratic polynomial, then
\begin{equation}
  \label{quadratics}
  \Ie{u}(x)  = \left( A^{\alpha, \beta}(x) -  \frac{\lambda}{2}I \right) : \D^2 u (x)  \qquad \forall \vare>0, \quad \forall u \in \polP_2,
\end{equation}
see Lemma~\ref{lem:approximation}.

We discretize \eqref{eq:isaacs} by using this approximation to obtain
\begin{align*}
  \frakF_h^\vare[u^\vare_h](z) &:=  -\frac\lambda2 \int_{\Omega} \GRAD u_h^\vare(x) \cdot \GRAD \phi_{z}(x) \diff x 
  + \inf_{\alpha \in \calA} \sup_{\beta \in \calB} \Ie{u_h^\vare}(z)\int_\Omega \phi_{z}(x) \diff x 
  \\
  &=  \int_\Omega \phi_{z}(x) \fe(x) \diff x \qquad \forall z \in \Nh,
\end{align*}
where $u_h^\vare$ is a continuous and piecewise affine finite element function over a mesh $\T_h$, $\N_h$ is the set of internal nodes of $\T_h$  and $\{\phi_z\}_{z \in \Nh}$ is the Lagrange nodal basis of the finite element space. 
We show that the FEM is monotone provided that meshes are weakly acute. To show existence and uniqueness, we employ a \emph{discrete} version of Perron's method, which seems to not have been considered before, especially in the finite element literature.  Exploiting monotonicity, and using the notion of \emph{weak consistency} introduced by Jensen and Smears in \cite{JensenSmears13} we show convergence to the viscosity solution following \cite{BarlesSouganidis91, KuoTrudinger92}.

We also derive rates of convergence for the method. 
The main difficulty to derive a rate of convergence is to establish a suitable notion of stability for the FEM applied to this fully nonlinear PDE. To address this issue, we resort to the discrete Alexandrov Bakelman Pucci (ABP) estimate of \cite{RHNWZ}, which reads
\begin{equation}
\label{intro:ABP}
  \sup_{\Omega} (u_h^\vare)^- \leq c \left( 
  \sum_{\left\{z \in \Nh: u_h^\vare(z) = \Gamma (u_h^\vare)(z) \right\}} |f_z|^d |\omega_z| \right)^{1/d}.
\end{equation}
Here $\left\{z \in \Nh: u_h^\vare(z) = \Gamma (u_h^\vare)(z) \right\}$ denotes the \emph{(lower) nodal contact set}, defined in \eqref{contactset},
\[
  f_z = \left( \int _\Omega \fe(x) \phi_z(x) \diff x \right) \left( \int_\Omega \phi_z(x) \diff x \right)^{-1},
\]
$|\omega_z|$ stands for the volume of the star $\omega_z= \supp\phi_z$ associated with the node $z \in \Nh$. Note that the nodal contact set is just a finite collection of nodes. With \eqref{intro:ABP} we obtain control of the negative part of $u_h^\vare$. If we consider the concave envelope and corresponding (upper) contact set we can estimate the positive part. A combination of these bounds yields stability, in the $L^\infty$-norm, of $u_h^\vare$ in terms of the $L^d$-norm of the right hand side $\fe$. As it has become now customary in numerical analysis, the correct notions of consistency and stability guarantee convergence. Moreover, by combining them with the regularity estimates for the Isaacs equation of \cite{MR2667633}, we show that, for some $\sigma > 0$,
\[
 \| \ue - u_h^\vare \|_{L^\infty(\Omega)} \lesssim h^{\frac\sigma{\sigma+2}} |\log h |^{\frac\sigma{\sigma+2}} \| \fe \|_{C^{0,1}(\bar\Omega)} .
\]

Finally, we discuss how to practically realize the method in question. We first study, following \cite{MR2551155}, a variant of Howard's algorithm to solve the ensuing discrete (nonlinear) systems. Moreover, we also consider a numerical scheme with quadrature which, provided the quadrature rule is properly chosen, possesses all the aforementioned stability and convergence properties.

The rest of this paper is organized as follows. In Section~\ref{sec:wujun}, we recall the approximation of elliptic problems by integro-differential equations and the main regularity results that follow from it. Section~\ref{sec:fem} presents our discretization and proves convergence to the viscosity solution. We recall the discrete ABP estimate for finite element methods in Section~\ref{sec:rates}. The discrete ABP estimate allows us to show stability and rates of convergence for our discretization. We discuss some implementation details in Section~\ref{sec:implementation}, where we present a convergent iterative scheme and a scheme with quadrature.

We will follow standard notation concerning differential operators and function spaces. The relation $A \lesssim B$ means that there is a nonessential constant $c$ such that $A \leq c B$. The value of this constant might change at each occurrence. By $A \gtrsim B$ we mean $B \lesssim A$.

\section{Approximation of elliptic problems by integro-differential operators}
\label{sec:wujun}

Let us review the approximation, proposed by Caffarelli and Silvestre in \cite{MR2667633}, of fully nonlinear elliptic PDEs by an integro-differential equation and the convergent finite element scheme of \cite{RHNWZ} for the approximation of elliptic problems in nondivergence form based on this idea. In this work, we exploit this approximation to discretize the fully nonlinear problem \eqref{eq:isaacs}.

\subsection{Integral operator}
Let us begin by fixing some notation. Recall that, owing to \eqref{eq:ellipticity}, for all $x\in \Omega$ the matrices $A^{\alpha,\beta}(x)$ are uniformly positive definite. Thus, we can define
\[
  M^{\alpha,\beta}(x) := \left( A^{\alpha,\beta}(x) - \frac\lambda2 I \right)^{1/2}.
\]
Let $\vare>0$ and $\varphi$ be a radially symmetric function with compact support in the unit ball $B_1$ and such that $\int |x|^2 \varphi(x) \diff x = d$. Let $Q=\sqrt{2/\lambda}$ and notice that, again because of \eqref{eq:ellipticity}, for all $\alpha \in \calA$ and $\beta \in \calB$ we have that if
\[
  \varphi_{x,\vare}^{\alpha,\beta}: y \mapsto \varphi\left( \frac{1}{\vare} M^{\alpha,\beta}(x)^{-1} y \right),
\]
then $\supp \varphi_{x,\vare}^{\alpha,\beta} \subset B_{Q\vare}$. For this reason, we define
\begin{equation}
  \Omega_\vare := \Omega \bigcup \left\{ x \in \Real^d: \dist(x,\partial\Omega) \leq Q\vare \right\}.
\end{equation}

Given a function $w \in C(\bar\Omega_\vare)$ we define, for each $\alpha$ and $\beta$, the function $\Ie w$ by
\begin{equation}
\label{eq:defofI}
  \Ie{w}(x) = \frac1{\vare^{d+2} \det M^{\alpha,\beta}(x)} \int_{\Real^d} \frakd w(x,y) 
  \varphi_{x,\vare}^{\alpha,\beta}(y) \diff y,
\end{equation}
where we denoted by $\frakd w(x,y)$ the second order difference of the function $w$ at the point $x$ in the direction $y$ which is given in \eqref{eq:defoffrakd}. Notice that, for $x \in \Omega$, $x \pm y \in \Omega_\vare$.
It is easy to check that if $w$ is a quadratic polynomial, then we have $\frakd w (x,y) = \D^2 w(x) : (y \otimes y)$ for all $x \in \Omega$. 

The integral operator $\Ie{w}$ is a consistent approximation of the differential operator $ \left( A^{\alpha,\beta} - \frac\lambda2 I \right) : \D^2 w$
in the following sense \cite{RHNWZ,MR2667633}.

\begin{lemma}[approximation properties of $\Ie{\cdot}$]
\label{lem:approximation}
Let $\Ie{\cdot}$ be the integral operator defined by \eqref{eq:defofI}. For $\vare > 0$ and $x \in \bar\Omega$, denote $U_{Q_\vare}(x) = \bar{B}_{Q\vare}(x) \cap \bar \Omega$, where $B_{Q\vare}(x)$ is the ball of radius $Q\vare$ and center $x$.
\begin{enumerate}[(a)]
  \item If $w \in \polP_2$, \ie it is a quadratic polynomial, then
  \[
    \Ie{w}(x) = A^{\alpha,\beta}(x):\D^2 w(x).
  \]

  \item If $w \in C^2(\bar\Omega_\vare)$ then, as $\vare \to 0$, we have that
  \[
    \Ie{w}(x) \to \left( A^{\alpha,\beta}(x) - \frac\lambda2 I \right) : \D^2 w (x).
  \]
%
\end{enumerate}
\end{lemma}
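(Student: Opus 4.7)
The plan is to verify both parts by reducing to a single Gaussian-type moment computation after a change of variables that diagonalizes the anisotropy encoded in $M^{\alpha,\beta}(x)$.

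For part (a), the starting point is the identity noted just before the lemma: when $w \in \polP_2$ we have $\frakd w(x,y) = \D^2 w(x):(y\otimes y)$ exactly, with no remainder. I would substitute this into \eqref{eq:defofI} and change variables $y = \vare M^{\alpha,\beta}(x)\, z$, which gives $\diff y = \vare^d \det M^{\alpha,\beta}(x)\,\diff z$ and kills all the $\vare$ and determinant factors in the prefactor. Writing $M=M^{\alpha,\beta}(x)$ for brevity and using symmetry of $M$, one has $\D^2 w(x):(Mz\otimes Mz) = (M\D^2 w(x)M):(z\otimes z)$, so
\[
  \Ie{w}(x) = (M\D^2 w(x) M) : \int_{\Real^d} (z\otimes z)\,\varphi(z)\,\diff z.
\]
The key computation is then that radial symmetry of $\varphi$ forces $\int z_i z_j \varphi(z)\,\diff z$ to be a scalar multiple of $\delta_{ij}$, and taking the trace together with the normalization $\int |z|^2\varphi(z)\,\diff z = d$ pins the constant to $1$, i.e.\ the second-moment matrix of $\varphi$ is the identity. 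Plugging this in gives $M\D^2 w(x)M : I = \tr(M^2\D^2 w(x)) = M^2:\D^2 w(x) = (A^{\alpha,\beta}(x)-\tfrac{\lambda}{2}I):\D^2 w(x)$, which is the consistency relation \eqref{quadratics} (I read the ``$A^{\alpha,\beta}:\D^2 w$'' in the lemma statement as a typo for this expression, consistent with part (b) and with \eqref{quadratics}).

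For part (b), I would write a second order Taylor expansion of $w$ with integral remainder around $x$ to get
\[
  \frakd w(x,y) = \D^2 w(x):(y\otimes y) + R(x,y),
\]
with $R(x,y) = \int_0^1(1-t)\bigl[\D^2 w(x+ty)+\D^2 w(x-ty)-2\D^2 w(x)\bigr]:(y\otimes y)\,\diff t$. Since $w\in C^2(\bar\Omega_\vare)$, the bracketed term has modulus at most $2\omega_w(t|y|)$, where $\omega_w$ is the modulus of continuity of $\D^2 w$ on $\bar\Omega_\vare$; hence $|R(x,y)|\le |y|^2\omega_w(|y|)$. Splitting the integral in \eqref{eq:defofI} into the quadratic piece and $R$, the quadratic piece yields $(A^{\alpha,\beta}(x)-\tfrac{\lambda}{2}I):\D^2 w(x)$ exactly by the part (a) computation. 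For the remainder, after the same change of variables $y=\vare M z$ and using $\supp\varphi\subset B_1$, one has $|y|\le Q\vare$ for all $z\in\supp\varphi$, so
\[
  \frac{1}{\vare^{d+2}\det M}\int R(x,y)\,\varphi_{x,\vare}^{\alpha,\beta}(y)\,\diff y
  \;\le\; \|M\|^2\,\omega_w(Q\vare)\int_{\Real^d} |z|^2 \varphi(z)\,\diff z = d\,\|M\|^2\,\omega_w(Q\vare),
\]
which tends to $0$ as $\vare\to 0$ by uniform continuity of $\D^2 w$ on $\bar\Omega_\vare$.

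The only mildly delicate step is the moment identity for $\varphi$ in part (a); once that is settled the rest is change of variables and Taylor's theorem. Part (b) is routine provided one is careful about the domain $\bar\Omega_\vare$ so that the shifts $x\pm ty$ remain inside it, but this is exactly why $\Omega_\vare$ was introduced.
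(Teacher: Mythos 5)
Your proof is correct, and since the paper defers the proof of this lemma entirely to the references \cite{RHNWZ,MR2667633}, there is no in-paper argument to compare against; what you give is the natural and standard one. A few remarks worth recording.

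You are right that the displayed identity in part (a) as printed conflicts with \eqref{quadratics}: the correct conclusion is $\Ie{w}(x) = \bigl(A^{\alpha,\beta}(x)-\tfrac\lambda2 I\bigr):\D^2 w(x)$, and your derivation establishes exactly that. The three ingredients — the exact identity $\frakd w(x,y)=\D^2w(x):(y\otimes y)$ for quadratics, the change of variables $y=\vare M^{\alpha,\beta}(x)z$ which cancels the $\vare^{-(d+2)}(\det M^{\alpha,\beta}(x))^{-1}$ prefactor, and the moment identity $\int z\otimes z\,\varphi(z)\,\diff z = I$ forced by radial symmetry and the normalization $\int|z|^2\varphi=d$ — are all used correctly, and the cyclicity $\tr(MHM)=\tr(M^2H)=M^2:H$ closes the computation.

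For part (b), the Taylor-with-integral-remainder decomposition and the bound $|R(x,y)|\le |y|^2\,\omega_w(|y|)$ with $\omega_w$ the modulus of continuity of $\D^2 w$ on $\bar\Omega_\vare$ are correct, and the change of variables confines $|y|$ to $O(\vare)$ so that $\omega_w(Q\vare)\to 0$ does the rest. The one place where you implicitly rely on the paper's own claim is in using $|y|\le Q\vare$ on $\supp\varphi_{x,\vare}^{\alpha,\beta}$; strictly speaking the radius of that support ball is governed by $\|M^{\alpha,\beta}(x)\|\le(\Lambda-\lambda/2)^{1/2}$, whereas the paper sets $Q=\sqrt{2/\lambda}$. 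This looks like an inconsistency in the paper rather than in your argument, and it is harmless here since all that matters is that the radius is a fixed multiple of $\vare$. No gaps; the proof stands.
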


Let us, from now on, assume that the matrices $M^{\alpha,\beta}$ have a uniform modulus of continuity $\varpi$. In other words, there is a nondecreasing function $\varpi$ such that
\begin{equation}
\label{eq:modulus}
  \sup_{x_1,x_2 \in \bar\Omega, |x_1 - x_2|\leq t} \| M^{\alpha,\beta}(x_1) - M^{\alpha,\beta}(x_2) \| \leq \varpi(t), 
    \quad \forall \alpha \in \calA,  \beta \in \calB.
\end{equation}
Under this assumption, we can show that, for every $\vare > 0$, the integral operator maps continuously $C^{0,1}(\bar\Omega_\vare)$ into $C^{0}(\bar\Omega)$.

\begin{lemma}[continuity of $\Ie{\cdot}$]
\label{lem:continuous}
If $w \in C^{0,1}(\bar\Omega_\vare)$, then, for all $\alpha \in \calA$ and $\beta \in \calB$ and all $x,z \in \bar\Omega$
\[
  \left| \Ie{w}(x) - \Ie{w}(z) \right| \lesssim \left(\frac{|x-z|}{\vare^2} + \frac{\varpi(|x-z|)}\vare \right) | w |_{C^{0,1}(\bar\Omega_\vare)},
\]
where the hidden constant is independent of $\alpha$, $\beta$, $x$, $z$, $\vare$ and $w$.
\end{lemma}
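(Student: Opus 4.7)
The plan is to reduce the difference of the two integrals to a single integral over a fixed domain by a change of variables, then split the difference of the integrands into a piece that measures the variation of $M^{\alpha,\beta}$ between $x$ and $z$ and a piece that measures the variation of $w$.

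For fixed $\alpha,\beta$, write $M_x = M^{\alpha,\beta}(x)$ and $M_z = M^{\alpha,\beta}(z)$. Since \eqref{eq:ellipticity} implies $M_x$ is uniformly positive definite, I would introduce $\eta = \vare^{-1} M_x^{-1} y$ in the formula \eqref{eq:defofI}, which yields the reduced form
\[
  \Ie{w}(x) = \frac{1}{\vare^{2}} \int_{\Real^d} \frakd w\bigl(x,\vare M_x \eta\bigr)\, \varphi(\eta)\diff\eta,
\]
and similarly for $\Ie{w}(z)$ using $M_z$. Note that $\varphi$ being supported in $B_1$ keeps $|\eta|\le 1$, which together with the uniform bound on $\|M_x\|,\|M_z\|$ ensures that $x\pm \vare M_x\eta$ and $z\pm \vare M_z\eta$ lie in $\bar\Omega_\vare$, so $w$ is evaluated in its domain of Lipschitz regularity.

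Next, I would add and subtract $\frakd w(x,\vare M_z\eta)$ to split
\[
  \frakd w(x,\vare M_x\eta) - \frakd w(z,\vare M_z\eta) = T_1(\eta) + T_2(\eta),
\]
where
\begin{align*}
  T_1(\eta) &= \frakd w(x,\vare M_x\eta) - \frakd w(x,\vare M_z\eta), \\
  T_2(\eta) &= \frakd w(x,\vare M_z\eta) - \frakd w(z,\vare M_z\eta).
\end{align*}
Expanding the definition \eqref{eq:defoffrakd}, the terms $-2w(x)$ cancel inside $T_1$, so each of the two remaining differences is controlled by $|w|_{C^{0,1}(\bar\Omega_\vare)}\,\vare\,\|M_x-M_z\|\,|\eta|$, which by \eqref{eq:modulus} gives $|T_1(\eta)|\lesssim |w|_{C^{0,1}(\bar\Omega_\vare)}\,\vare\,\varpi(|x-z|)\,|\eta|$. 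For $T_2$ the direction $\vare M_z\eta$ is the same in both $\frakd$'s, so every one of the (four) differences of $w$-values is bounded by $|w|_{C^{0,1}(\bar\Omega_\vare)}|x-z|$, giving $|T_2(\eta)|\lesssim |w|_{C^{0,1}(\bar\Omega_\vare)}|x-z|$.

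Finally, I would plug these bounds back into the reduced integral, multiply by $\vare^{-2}$, and use that $\varphi\in C_c(B_1)$ so that $\int\varphi(\eta)\diff\eta$ and $\int|\eta|\varphi(\eta)\diff\eta$ are bounded constants independent of $\alpha,\beta,\vare$. The contribution of $T_2$ produces the $|x-z|/\vare^2$ term and that of $T_1$ produces the $\varpi(|x-z|)/\vare$ term, yielding the stated estimate. There is no real obstacle here; the only point requiring some care is the change of variables and verifying that all evaluations of $w$ stay in $\bar\Omega_\vare$, which is exactly why $\Omega_\vare$ was enlarged in the first place.
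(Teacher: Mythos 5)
Your proof is correct and takes essentially the same route as the paper: both reduce $\Ie{w}$ to an integral over $B_1$ by a change of variables, then split the integrand difference into one piece controlled by the Lipschitz seminorm of $w$ (giving the $|x-z|/\vare^2$ term) and one piece controlled by the modulus of continuity of $M^{\alpha,\beta}$ (giving the $\varpi(|x-z|)/\vare$ term). The only cosmetic difference is that the paper first folds $\frakd$ into a single forward difference using the radial symmetry of $\varphi$ and then absorbs both contributions into one term via the triangle inequality on $|x-z+\vare(M^{\alpha,\beta}(x)-M^{\alpha,\beta}(z))y|$, whereas you keep the full second difference and separate the two error sources cleanly by inserting the intermediate term $\frakd w(x,\vare M_z\eta)$; both are valid.
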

\begin{proof}
By the radial symmetry of $\varphi$
\[
  \Ie{w}(x) = \frac2{\vare^2} \int_{B_1} \left( w(x + \vare M^{\alpha,\beta}(x) y ) - w(x) \right) \varphi(y) \diff y.
\]
Thus,
\[
  \left| \Ie{w}(x) - \Ie{w}(z) \right| \leq \textrm{I} + \textrm{II},
\]
with
\begin{align*}
  \textrm{I} &= \frac2{\vare^2} \int_{B_1} \left| w(x+\vare M^{\alpha,\beta}(x)y) - w(z+\vare M^{\alpha,\beta}(z)y) \right| \varphi(y) \diff y, \\
  \textrm{II} &= \frac2{\vare^2} \int_{B_1} \left| w(x) - w(z) \right| \varphi(y) \diff y.
\end{align*}
Evidently,
\[
  \textrm{II} \lesssim \frac{|x-z|}{\vare^2} | w|_{C^{0,1}(\bar\Omega)}.
\]
It remains then to estimate the first term. To do so we, again, use that $w \in C^{0,1}(\bar\Omega_\vare)$ to obtain
\[
  \textrm{I} \lesssim \frac{ |w|_{C^{0,1}(\bar\Omega_\vare)}}{\vare^2} 
  \sup_{y \in B_1} \left|x - z + \vare\left( M^{\alpha,\beta}(x) - M^{\alpha,\beta}(z) \right) y \right|.
\]
an application of the triangle inequality, together with \eqref{eq:modulus} imply
\[
  \textrm{I} \lesssim \left( \frac{|x-z|}{\vare^2} + \frac{\varpi(|x-z|)}\vare \right) |w|_{C^{0,1}(\bar\Omega_\vare)},
\]
where the hidden constant is uniform in $\alpha$ and $\beta$.

Gathering the obtained bounds for $\textrm{I}$ and $\textrm{II}$ allows us to conclude.
\end{proof}

In \cite{MR2667633} Caffarelli and Silvestre proposed an approximation of the Isaacs equation \eqref{eq:isaacs} by the following integro-differential problem
\begin{equation}
\label{eq:isaacseps}
  \frakF^\vare[u^\vare](x) := \frac\lambda2 \LAP u^\vare(x) 
  + \inf_{\alpha \in \calA} \sup_{\beta \in \calB} \Ie{u^\vare}(x) = \fe(x) \ \text{in } \Omega, \quad
  u^\vare = 0 \ \text{in } \Omega_\vare\setminus\Omega.
\end{equation}

The approximation property of the integral operator $\Ie\cdot$ given in Lemma~\ref{lem:approximation}, allows us to relate $u^\vare$ and $\ue$, which solve \eqref{eq:isaacseps} and \eqref{eq:isaacs}, respectively, as follows \cite{MR2667633}.

\begin{proposition}[approximation of the Isaacs equation]
\label{prop:CS}
Let $\Omega \subset \Real^d$ be bounded and convex and $\fe \in C^{0,s}(\bar\Omega)$ for some $s\in [0,1)$. There exists a unique function $u^\vare \in C^{2,s}(\Omega)$ that solves \eqref{eq:isaacseps}.  Moreover, if $\fe \in C^{0,1}(\bar\Omega)$, this solution satisfies
\[
  \| u^\vare \|_{C^{1,s}(\Omega)} + \| u^\vare \|_{C^{0,1}(\bar\Omega)} \lesssim \| u^\vare \|_{L^\infty(\Omega)} + \| \fe \|_{L^\infty(\Omega)}.
\]
In addition, there is a $\sigma>0$ such that
\begin{equation}
\label{eq:rateuueps}
  \| \ue - u^\vare \|_{L^\infty(\Omega)} \lesssim \vare^\sigma \| \fe \|_{C^{0,1}(\bar\Omega)},
\end{equation}
where $\ue$ is the (unique) viscosity solution to \eqref{eq:isaacs}.
\end{proposition}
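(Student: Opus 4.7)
The statement is essentially a summary of the analysis of Caffarelli and Silvestre in \cite{MR2667633}, so my plan is to verify that $\frakF^\vare$ falls into their nonlocal framework and then invoke their results, supplemented by a perturbation/regularization argument for the rate.

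\emph{Well-posedness.} I would first note that for each fixed $\alpha,\beta$ the integrand in $\Ie{\cdot}$ involves the nonnegative symmetric kernel $(\vare^{d+2}\det M^{\alpha,\beta}(x))^{-1}\varphi_{x,\vare}^{\alpha,\beta}(y)$ acting on the symmetric second difference $\frakd u(x,y)$, so $\Ie{\cdot}$ is a monotone L\'evy-type operator. Combined with $\tfrac\lambda2\LAP$, the operator $\frakF^\vare$ is uniformly elliptic in the Caffarelli--Silvestre sense, hence enjoys a comparison principle, and existence of a bounded viscosity solution follows by Perron's method, using the zero function and an explicit convex quadratic as sub/super-solutions. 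This also yields $\|u^\vare\|_{L^\infty(\Omega)} \lesssim \|\fe\|_{L^\infty(\Omega)}$.

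\emph{Regularity.} I would then rewrite the equation as
\[
  \frac\lambda2 \LAP u^\vare(x) = \fe(x) - \inf_{\alpha \in \calA}\sup_{\beta \in \calB} \Ie{u^\vare}(x)
\]
and bootstrap. The nonlocal Krylov--Safonov / Evans--Krylov-type estimates of \cite{MR2667633}, applied at scale $\vare$, give interior H\"older regularity for $u^\vare$; Lemma~\ref{lem:continuous} then ensures the right-hand side above is continuous (in fact H\"older), so local Schauder theory for $\LAP$ improves the regularity to $C^{2,s}$ in the interior and to $C^{1,s}$ up to smooth parts of $\partial\Omega$, while global Lipschitz bounds on the convex domain $\Omega$ follow from standard barrier constructions.

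\emph{Rate of convergence.} For \eqref{eq:rateuueps} I would use a Krylov--Barles--Jakobsen regularization scheme. The consistency \eqref{quadratics} together with a second-order Taylor expansion yields, for sufficiently smooth $v$,
\[
  \bigl|\frakF^\vare[v](x) - \frakF[v](x)\bigr| \lesssim \vare^{\sigma}\,[v]_{C^{2,\sigma}(\bar\Omega_\vare)},
\]
uniformly in $\alpha,\beta$. Since $\ue$ is only $C^{1,\alpha}$, I would replace it by its sup- and inf-convolutions at scale $\rho$, which are semiconvex/concave, satisfy perturbed versions of \eqref{eq:isaacseps} on a slightly interior subdomain with controlled consistency error, and can then be compared against $u^\vare$ using the comparison principle. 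Optimizing $\rho$ to balance the consistency penalty (which behaves like $\vare^{\sigma}\rho^{-1+\sigma/2}$) against the regularization defect (of order $\sqrt{\rho}\,\|\fe\|_{C^{0,1}}$) produces an exponent $\sigma'>0$. The main obstacle is this last step: one must carefully track the interplay between the smoothing scale $\rho$ and the approximation scale $\vare$, and handle the boundary layer of width $Q\vare$ in $\Omega_\vare\setminus\Omega$ where $u^\vare$ is forced to vanish but $\ue$ is not. The Isaacs inf-sup structure itself introduces no extra difficulty, since both the consistency and the ellipticity bounds are uniform in $(\alpha,\beta)$.
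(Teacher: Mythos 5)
The paper does not prove Proposition~\ref{prop:CS}: it is stated as a collection of results lifted directly from Caffarelli and Silvestre \cite{MR2667633}, with no self-contained argument supplied. You correctly identify that source, and your sketch of well-posedness (Perron's method with quadratic barriers, giving the $L^\infty$ bound) and of the interior bootstrap (Krylov--Safonov H\"older regularity plus Schauder theory for $\frac{\lambda}{2}\LAP$) broadly tracks how one would verify that $\frakF^\vare$ fits the Caffarelli--Silvestre framework. To that extent the first two parts of your plan are reasonable, even if in the paper's economy the right move is simply to cite.

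Your rate argument, however, has a genuine gap, and it sits exactly where you dismiss the difficulty. You propose a Krylov--Barles--Jakobsen sup-/inf-convolution scheme and conclude with the claim that ``the Isaacs inf-sup structure itself introduces no extra difficulty, since both the consistency and the ellipticity bounds are uniform in $(\alpha,\beta)$.'' Uniform consistency and ellipticity are not what that method needs: the shaking-the-coefficients / sup-convolution argument delivers one-sided rates only for convex (Bellman) or concave operators, because it relies on the regularized function remaining a sub- or super-solution of a translated equation, and that hinges on the convexity of the nonlinearity, not merely on uniform ellipticity. For the Isaacs operator $\inf_\alpha\sup_\beta$ this structure is absent, which is precisely why rates for nonconvex equations were open until \cite{CaffarelliSouganidis08}, and for Isaacs with $x$-dependence until \cite{MR3355497}. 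Caffarelli and Silvestre obtain \eqref{eq:rateuueps} by a different route: a quantitative regularity estimate for $u^\vare$ (tracking the blow-up of $\|u^\vare\|_{C^{2,s}}$ in $\vare$) combined with the $C^{1,\alpha}$ regularity of $\ue$ and an ABP-type comparison, not by regularizing $\ue$ with sup/inf convolutions. So if you insist on reconstructing the proof rather than citing, you must reproduce their argument; the Barles--Jakobsen route you outline does not extend to this nonconvex setting without a substantially new idea.
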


An important consequence of Proposition~\ref{prop:CS} is that, in the case when the coefficient matrices $\{A^{\alpha,\beta}\}$ are independent of $x$, we have the following regularity result for $\ue$.

\begin{corollary}[regularity of $\ue$]
\label{cor:reg}
Assume that, for all $\alpha \in \calA$ and $\beta \in \calB$ $A^{\alpha,\beta}(x) = A^{\alpha,\beta}$, that $\Omega$ satisfies an exterior ball condition and that $\fe \in C^{0,1}(\bar \Omega)$. If $\ue$ is the viscosity solution of \eqref{eq:isaacs} then there is a $s>0$ such that $\ue \in C^{1,s}(\Omega)\cap C^{0,1}(\bar\Omega)$.
\end{corollary}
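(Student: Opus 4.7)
The plan is to lift the regularity of the integro-differential approximation $u^\vare$ to $u$ by compactness, exploiting that in the $x$-independent case the estimates of Proposition~\ref{prop:CS} are uniform in $\vare$.

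First, I would establish that $\{u^\vare\}_{\vare>0}$ is uniformly bounded in $L^\infty(\Omega)$. This follows either directly from the ABP/maximum principle for the operator $\frakF^\vare$, or more quickly from \eqref{eq:rateuueps}: since $\ue \in C(\bar\Omega)$ and $\|\ue - u^\vare\|_{L^\infty} \lesssim \vare^\sigma\|\fe\|_{C^{0,1}}$, the family $\{u^\vare\}$ is bounded uniformly in $\vare$ by a constant depending only on $\|\fe\|_{C^{0,1}(\bar\Omega)}$ and $\mathrm{diam}(\Omega)$. Feeding this into Proposition~\ref{prop:CS} yields
\[
  \|u^\vare\|_{C^{1,s}(\Omega)} + \|u^\vare\|_{C^{0,1}(\bar\Omega)} \leq C,
\]
with $C$ independent of $\vare$. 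The key point here is that when the $A^{\alpha,\beta}$ do not depend on $x$, the modulus $\varpi$ in \eqref{eq:modulus} is identically zero, so the hidden constants coming from the continuity estimate of Lemma~\ref{lem:continuous} and the Caffarelli--Silvestre regularity theory invoked in Proposition~\ref{prop:CS} do not blow up with $\vare$.

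Next, I would apply the Arzel\`a--Ascoli theorem: the uniform $C^{1,s}(\Omega)$ bound together with the uniform $C^{0,1}(\bar\Omega)$ bound gives precompactness in $C^{1,s'}_{\mathrm{loc}}(\Omega) \cap C^{0,s''}(\bar\Omega)$ for any $s' < s$ and $s'' <1$. Extract a subsequence $u^{\vare_k}$ converging to some $v$ in this topology; by \eqref{eq:rateuueps} the full family $u^\vare \to \ue$ uniformly on $\bar\Omega$, so $v = \ue$. Uniqueness of the limit forces the whole family (not just a subsequence) to converge, and by lower semicontinuity of H\"older seminorms under uniform convergence on compact sets,
\[
  |\ue|_{C^{1,s}(K)} \leq \liminf_{\vare \to 0}|u^\vare|_{C^{1,s}(K)} \leq C \quad \text{for every } K \Subset \Omega,
\]
and similarly $\ue \in C^{0,1}(\bar\Omega)$. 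This yields $\ue \in C^{1,s}(\Omega)\cap C^{0,1}(\bar\Omega)$.

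The main obstacle is really verifying that the estimate of Proposition~\ref{prop:CS} is uniform in $\vare$ under the $x$-independence hypothesis, and in particular that the boundary Lipschitz bound $\|u^\vare\|_{C^{0,1}(\bar\Omega)}$ is uniform. This is where the exterior ball condition enters: one constructs classical super- and subsolution barriers of the form $\pm C\,\mathrm{dist}(x,\partial\Omega)$ (or the standard quadratic barriers on exterior balls), which are admissible competitors for $\frakF^\vare$ because their Laplacian and second differences are controlled independently of $\vare$. These barriers pin $|u^\vare(x)| \lesssim \mathrm{dist}(x,\partial\Omega)$ uniformly in $\vare$, which combined with the interior $C^{1,s}$ bound gives the global Lipschitz control needed to close the argument.
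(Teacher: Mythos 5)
Your proof is correct, and it supplies exactly the argument the paper leaves implicit: Corollary~\ref{cor:reg} is stated without an explicit proof, presented merely as a ``consequence of Proposition~\ref{prop:CS}''. Your chain of reasoning---uniform $L^\infty$ bound on $u^\vare$ via \eqref{eq:rateuueps}, the $\vare$-uniform $C^{1,s}(\Omega)\cap C^{0,1}(\bar\Omega)$ estimate of Proposition~\ref{prop:CS} (valid uniformly in $\vare$ in the $x$-independent setting), then Arzel\`a--Ascoli together with lower semicontinuity of the H\"older seminorms under the known uniform convergence $u^\vare\to\ue$---is the standard and intended argument.
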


The importance of this result lies in the fact that $\frakF$ is not convex nor concave and, for that reason, the maximal regularity we can assert for $\ue$ is $C^{1,s}(\Omega)$, for some $s>0$. We refer the reader, for instance, to the works by Nadirashvili and Vl{\u a}du{\c t} \cite{MR2373018,MR2391642}, who have constructed viscosity solutions to nonconvex fully nonlinear elliptic equations whose Hessian is not bounded. This is in sharp contrast with, for instance, the stationary Hamilton Jacobi Bellman equation where it can be shown that the solution belongs to $C^{2,s}(\Omega)$ for some $s>0$. Let us, finally, comment that the rate of convergence given in \eqref{eq:rateuueps} cannot be improved; see the last paragraph of \cite{MR2667633}.

In this paper, based on the idea of an integro-differential approximation, we propose a finite element method for Isaacs equation \eqref{eq:isaacs}.

\section{Finite element discretization and convergence}
\label{sec:fem}
Here we describe the scheme we use to approximate \eqref{eq:isaacs} and show that it converges to the (unique) viscosity solution. 

\subsection{Description of the scheme}
\label{sub:setting}
Let $\T_h$ be a quasi-uniform mesh of size $h$ of the domain $\Omega$. We denote by $\Nh$ and $\Nh^{\partial}$ the collection of interior and boundary nodes of $\T_h$, respectively. Denote by $\polP_1$ the space of polynomials of degree one. We define the finite element spaces
\[
  \polV_h := \left\{ v \in C(\bar\Omega) :  v_{|T} \in \polP_1 \; \forall T \in \T_h \right\}
\]
and
\[
  \polV_h^0 := \left\{ v \in \polV_h : v_{|\partial\Omega} = 0 \right\}.
\]
We denote by $\{\phi_z\}_{z \in \Nh}$ the Lagrange nodal basis of $\polV_h^0$. Extending by zero, we can always assume that functions of $\polV_h^0$ are defined on $\Omega_\vare$. Although not needed for the implementation, we introduce a quasiuniform mesh $\T_h^\vare$ on $\Omega_\vare$ and assume that, on $\Omega$, it coincides with $\T_h$. We denote by $\Nh^\vare$ the nodes of $\T_h^\vare$ that are not in $\bar\Omega$.

For any $w_h \in \polV_h$ and $z \in \Nh$, we define the discrete Laplacian by
\begin{align}\label{discretelaplace}
  \LAP_h w_h(z) =  - \left(\int_\Omega \phi_{z}(x) \diff x\right)^{-1} \int_\Omega \GRAD w_h(x) \cdot \GRAD \phi_z(x) \diff x.
\end{align}
With this notation at hand, we now define our scheme. We seek a function $u_h^\vare \in \polV_h^0$ such that, for every $z \in \Nh$, satisfies
\begin{equation}
\label{eq:isaacsepsh}
  \frakF_h^\vare[u^\vare_h](z) := \frac\lambda2 \LAP_h u_h^\vare(z) 
  + \inf_{\alpha \in \calA} \sup_{\beta \in \calB} \Ie{u_h^\vare}(z)  = f_{z},
\end{equation}
where
\[
  f_{z} = \left( \int_\Omega \phi_{z}(x) \diff x \right)^{-1} \int_\Omega \fe(x) \phi_{z}(x) \diff x.
\]

\subsection{Galerkin projection}

In the analysis that follows we will make repeated use of the Galerkin projection and its properties. The Galerkin projection $\calG_h : W^1_1(\Omega) \cap C(\bar\Omega_\vare \setminus \Omega) \to \polV_h$ is the map that verifies $\calG_h w (z) = w(z)$ for $z \in \Nh^{\partial} \cup \Nh^\vare$ and
\begin{equation}
\label{eq:defofGh}
\int_\Omega \GRAD \calG_h w(x) \SCAL \GRAD v_h(x) \diff x = \int_\Omega \GRAD w(x) \SCAL \GRAD v_h(x) \diff x \quad \forall v_h \in \polV_h^0.
\end{equation}
Notice that, if $w_{|\Omega} \in C^2(\Omega)$, setting $v_h = \phi_{z}$ in \eqref{eq:defofGh} and integrating by parts we get
\[
  \LAP_h \calG_h w(z) = \left( \int_\Omega \phi_{z}(x) \diff x \right)^{-1} \int_\Omega \phi_{z}(x) \LAP w(x) \diff x,
\]
that is, the discrete Laplacian of the Galerkin projection at a node is a weighted average of the Laplacian of the original function around this node. 

It is well known that the Galerkin projection has near optimal approximation properties \cite{SchatzWahlbin82} in the $L^{\infty}$-norm,
\[
\inftynorm{ w - \calG_h w } \lesssim |\log h| \inf_{v_h \in \Vh} \inftynorm{w - v_h}.
\]
Combining this with a standard interpolation estimate over $\bar\Omega_\vare\setminus \Omega$ we conclude that if $w \in C^{0,1}(\bar\Omega_\vare)$, then we have
\begin{equation}
\label{eq:Galproj}
\| w - \calG_h w \|_{L^\infty(\Omega_\vare)} \lesssim h |\log h| \| w \|_{C^{0,1}(\bar\Omega_\vare)}.
\end{equation}
Owing to \eqref{eq:Galproj}, for every $z \in \Nh$ we obtain
\[
  \left| \frakd \calG_h w (z, y) - \frakd w (z, y) \right| \lesssim h |\log h| \| w \|_{C^{0,1}(\bar\Omega_\vare)} .
\]
By definition \eqref{eq:defofI} of the integral operator $\Ie{\cdot}$, we deduce that for all nodes $z \in \Nh$ we have
\begin{equation}
\label{estimateIe}
  \left| \Ie{\calG_h w} (z) - \Ie{w}(z) \right| \lesssim \frac{h}{\varepsilon^2} |\log h| \| w \|_{C^{0,1}(\bar\Omega_\vare)}
\end{equation}
uniformly in $\calA$ and $\calB$.

\subsection{Existence and uniqueness of the numerical solution}
\label{sub:dmp}
%
Based on the notions of \emph{monotonicity} and \emph{weak consistency} advanced in \cite{BarlesSouganidis91, JensenSmears13, RHNWZ}, we show existence and uniqueness of solutions to \eqref{eq:isaacsepsh}. Moreover, the family $\{u_h^\vare\}_{h>0,\vare>0}$ of solutions to \eqref{eq:isaacsepsh} converges to $\ue$, the unique viscosity solution of \eqref{eq:isaacs}, provided the mesh size $h$ and $\vare$ satisfy a suitable relation.

We begin with two elementary properties, namely monotonicity and continuity, of the inf-sup operator.

\begin{lemma}[monotonicity and continuity of inf-sup]
\label{lem:infsup}
Let $\{X^{\alpha,\beta}: \alpha \in \calA, \ \beta \in \calB \}$ and $\{Y^{\alpha,\beta}: \alpha \in \calA, \ \beta \in \calB \}$ be two families parametrized by two index sets $\calA$ and $\calB$. 
If for every fixed $\alpha \in \calA$ and $\beta \in \calB$ we have that
$
  X^{\alpha,\beta} \leq Y^{\alpha,\beta},
$
then
\begin{equation}
\label{eq:infsup}
  \inf_{\alpha \in \calA} \sup_{\beta \in \calB}  X^{\alpha,\beta} \leq \inf_{\alpha \in \calA} \sup_{\beta \in \calB}Y^{\alpha,\beta}.
\end{equation}
Moreover, if there is a constant $C$ such that for all $\alpha$ and $\beta$ we have
$
  \left| X^{\alpha,\beta} - Y^{\alpha,\beta} \right| \leq C,
$
then
\begin{equation}
\label{eq:infsupabs}
  \left| \inf_{\alpha \in \calA} \sup_{\beta \in \calB}  X^{\alpha,\beta} - \inf_{\alpha \in \calA} \sup_{\beta \in \calB}Y^{\alpha,\beta} \right| \leq C.
\end{equation}
\end{lemma}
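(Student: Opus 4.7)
The plan is elementary and splits into two independent steps, one for each inequality.

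For the monotonicity statement \eqref{eq:infsup}, I would fix an arbitrary $\alpha \in \calA$. Combining the hypothesis $X^{\alpha,\beta} \leq Y^{\alpha,\beta}$, valid for every $\beta \in \calB$, with the trivial bound $Y^{\alpha,\beta} \leq \sup_{\beta \in \calB} Y^{\alpha,\beta}$ gives $X^{\alpha,\beta} \leq \sup_{\beta \in \calB} Y^{\alpha,\beta}$ for all $\beta$. Taking the supremum over $\beta$ on the left yields $\sup_{\beta \in \calB} X^{\alpha,\beta} \leq \sup_{\beta \in \calB} Y^{\alpha,\beta}$, and since this holds for every $\alpha$, taking the infimum over $\alpha$ on both sides gives \eqref{eq:infsup}.

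For the continuity estimate \eqref{eq:infsupabs}, I would unpack the two-sided hypothesis $|X^{\alpha,\beta} - Y^{\alpha,\beta}| \leq C$ into $X^{\alpha,\beta} \leq Y^{\alpha,\beta} + C$ and $Y^{\alpha,\beta} \leq X^{\alpha,\beta} + C$. Applying the monotonicity result just proved to the first of these inequalities, and observing that adding a constant commutes with both $\inf$ and $\sup$, I obtain $\inf_{\alpha \in \calA} \sup_{\beta \in \calB} X^{\alpha,\beta} \leq \inf_{\alpha \in \calA} \sup_{\beta \in \calB} Y^{\alpha,\beta} + C$. The symmetric application to the second inequality yields the reverse bound, and \eqref{eq:infsupabs} follows.

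There is no real obstacle here: the argument relies only on the order-theoretic properties of suprema and infima and requires no hypothesis on $\calA$ or $\calB$ beyond their being index sets (in this paper they are moreover finite, so the $\inf$ and $\sup$ are attained, but that plays no role). The only thing to be mindful of is the usual notational care when commuting additive constants through $\inf$ and $\sup$.
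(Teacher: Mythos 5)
Your proof is correct and follows essentially the same route as the paper: first deduce $\sup_\beta X^{\alpha,\beta}\le\sup_\beta Y^{\alpha,\beta}$ for each $\alpha$ and take the infimum, then split the two-sided bound and apply this monotonicity twice (using that constants pass through $\inf$ and $\sup$). No meaningful difference from the authors' argument.
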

\begin{proof}
If $ X^{\alpha,\beta} \leq Y^{\alpha,\beta}$ for all fixed $\alpha$ and $\beta$, taking supremum on $Y^{\alpha, \beta}$ with respect to $\beta$ first, we have
\[
  X^{\alpha,\beta} \leq \sup_{\beta \in \calB} Y^{\alpha,\beta}
  \quad \implies \quad
  \sup_{\beta \in \calB} X^{\alpha,\beta} \leq \sup_{\beta \in \calB} Y^{\alpha,\beta}.
\]
Taking infimum on $X^{\alpha, \beta}$ with respect to $\alpha$, we get
\[
  \inf_{\alpha \in \calA}\sup_{\beta \in \calB} X^{\alpha,\beta} \leq \sup_{\beta \in \calB} Y^{\alpha,\beta}
  \quad \implies \quad
  \inf_{\alpha \in \calA}\sup_{\beta \in \calB} X^{\alpha,\beta} \leq \inf_{\alpha \in \calA}\sup_{\beta \in \calB} Y^{\alpha,\beta}.
\]
This proves the first inequality \eqref{eq:infsup}. 

To prove the second inequality, we only need to note that if  $X^{\alpha,\beta} \leq Y^{\alpha,\beta} + C$ for all $\alpha$ and $\beta$, then by the first equality
\[
 \inf_{\alpha \in \calA}\sup_{\beta \in \calB} X^{\alpha,\beta} \leq \inf_{\alpha \in \calA}\sup_{\beta \in \calB} Y^{\alpha,\beta} + C.
\]
Similarly, if $Y^{\alpha,\beta} \leq X^{\alpha,\beta} + C$ for all $\alpha$ and $\beta$, then
\[
 \inf_{\alpha \in \calA}\sup_{\beta \in \calB} Y^{\alpha,\beta} \leq \inf_{\alpha \in \calA}\sup_{\beta \in \calB} X^{\alpha,\beta} + C.
\]
This proves the second inequality \eqref{eq:infsupabs}. 
\end{proof}

We now establish the monotonicity and weak consistency of the proposed scheme.

\begin{lemma}[monotonicity of the numerical scheme]
\label{lem:monotone}
Assume that for every discretization parameter $h>0$ the mesh $\T_h$ is weakly acute. Then the family of operators $\frakF_h^\vare$ is monotone in the sense that if $v_h,w_h \in \polV_h$ with $v_h \leq w_h$ and equality holds at some node $z \in \Nh$, then
\[
  \frakF_h^\vare[v_h](z) \leq \frakF_h^\vare[w_h](z).
\]
\end{lemma}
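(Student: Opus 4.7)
The plan is to split $\frakF_h^\vare$ into its two pieces, the discrete Laplacian $\frac{\lambda}{2}\LAP_h$ and the inf-sup term, and establish monotonicity for each separately. Setting $e_h = w_h - v_h \in \polV_h$, one has $e_h \ge 0$ on $\bar\Omega$ (and, via zero extension, on $\bar\Omega_\vare$) together with $e_h(z) = 0$. Since both summands in $\frakF_h^\vare$ are linear in the argument in the first case and admit direct sign comparisons in the second, it suffices to show $\LAP_h e_h(z) \ge 0$ and $\Ie{e_h}(z) \ge 0$ for every $\alpha \in \calA$, $\beta \in \calB$.

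For the discrete Laplacian, I would expand $e_h = \sum_{y \in \Nh \cup \Nh^\partial} e_h(y)\phi_y$. Substituting into \eqref{discretelaplace} and using that $e_h(z)=0$ yields
\[
  \LAP_h e_h(z) \;=\; -\Bigl(\int_\Omega \phi_z\Bigr)^{-1} \sum_{y\neq z} e_h(y) \int_\Omega \GRAD \phi_y \SCAL \GRAD \phi_z \diff x.
\]
The weak acuteness of $\T_h$ is exactly the statement that the off-diagonal stiffness entries $\int_\Omega \GRAD \phi_y \SCAL \GRAD \phi_z \diff x$ are nonpositive for all distinct nodes $y,z$. Combined with $e_h(y)\ge 0$, every term in the sum is nonpositive, and the leading minus sign produces $\LAP_h e_h(z)\ge 0$, i.e.\ $\LAP_h v_h(z) \le \LAP_h w_h(z)$.

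For the integral operator, I would inspect the second difference $\frakd e_h(z,y) = e_h(z+y) - 2e_h(z) + e_h(z-y)$. Since $e_h(z)=0$ and $e_h \ge 0$ on $\bar\Omega_\vare$ (the radius $Q\vare$ of the support ensures the shifted points $z\pm y$ lie in $\bar\Omega_\vare$), we have $\frakd e_h(z,y)\ge 0$ for every $y$ in the integration domain. Because $\varphi$ is a nonnegative weight (as the standard mollifier that the construction tacitly assumes to obtain a bona fide approximation of the differential operator), the kernel $\varphi_{z,\vare}^{\alpha,\beta}$ is also nonnegative, and formula \eqref{eq:defofI} gives $\Ie{e_h}(z)\ge 0$, that is $\Ie{v_h}(z)\le \Ie{w_h}(z)$ for each fixed $\alpha,\beta$. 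Invoking the monotonicity part \eqref{eq:infsup} of Lemma~\ref{lem:infsup} propagates this inequality through the inf-sup, and adding the two resulting estimates yields $\frakF_h^\vare[v_h](z)\le \frakF_h^\vare[w_h](z)$.

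No real obstacle is expected: the weak acuteness hypothesis has been tailored precisely so that the Laplacian part produces the correct signs, while the integral part is monotone by inspection once the nonnegativity of $\varphi$ is recorded. The only point deserving care is the need to work on $\bar\Omega_\vare$ rather than $\bar\Omega$, which is handled by the zero extension built into $\polV_h^0$ and the fact that both $v_h$ and $w_h$ vanish outside $\Omega$, so the inequality $v_h \le w_h$ carries over to the enlarged domain automatically.
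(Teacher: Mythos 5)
Your proof is correct and follows the same decomposition as the paper's: monotonicity of $\LAP_h$ from weak acuteness, monotonicity of each $\Ie{\cdot}$ from the sign of the second difference $\frakd$, and then the inf-sup monotonicity of Lemma~\ref{lem:infsup}; the only stylistic difference is that you unpack the discrete maximum principle for $\LAP_h$ via the stiffness matrix, whereas the paper simply cites it. You are also right to record that $\varphi \geq 0$ is needed to pass from $\frakd e_h(z,y)\geq 0$ to $\Ie{e_h}(z)\geq 0$ --- the paper relies on this but leaves the hypothesis implicit in its description of $\varphi$.
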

\begin{proof}
Let $v_h, w_h \in \polV_h$ be as indicated. Since $\T_h$ is weakly acute, the operator $\LAP_h$ satisfies a discrete maximum principle, \ie 
\[
  \LAP_h v_h(z) \leq \LAP_h w_h(z).
\]
It remains then to deal with the inf-sup over the approximating integral operators. 
Since $v_h \leq w_h$ and equality holds at some node $z \in \Nh$, we have $\frakd v_h (z, y) \leq \frakd w_h (z, y)$. Therefore, for fixed $\alpha \in \calA$ and $\beta \in \calB$, we get
$
  \Ie{v_h}(z) \leq \Ie{w_h}(z).
$
Using the monotonicity of the inf-sup operator \eqref{eq:infsup}, we obtain
\[
  \inf_{\alpha \in \calA} \sup_{\beta \in \calB} \Ie{v_h}(z) 
  \leq 
  \inf_{\alpha \in \calA} \sup_{\beta \in \calB} \Ie{w_h}(z).
\]
Combining both inequalities, we then deduce that
\[
  \frakF_h^\vare[v_h](z) \leq \frakF_h^\vare[w_h](z).
\]
This completes the proof.
\end{proof}

The analysis of our scheme is based on a careful estimation of the consistency error, which we define as the difference between the result of applying the operator to the Galerkin projection of the solution to \eqref{eq:isaacseps} and the approximate right hand side, \ie $\Fhe{ \calG_h u^\vare}(z) - f_z$. The critical step in this analysis is to understand what happens with the integral operators. With this in mind, and for future use, for a function $w \in C(\bar\Omega_\vare)$  and $z \in \Nh$ we denote
\begin{equation}
\label{eq:defofcalR}
  \begin{aligned}
    \calR_{h,\vare}[w](z) &= 
    \left( \int_\Omega \phi_z(x) \diff x \right)^{-1}
    \int_\Omega \left[ \inf_{\alpha \in \calA}\sup_{\beta \in \calB} \Ie{\calG_h w}(z) \right. \\
    &= \left. \inf_{\alpha \in \calA}\sup_{\beta \in \calB} \Ie{w}(x) \right] \phi_z(x) \diff x.
  \end{aligned}
\end{equation}

The following result bounds $\calR_{h,\vare}[w](z)$ uniformly in $z \in \Nh$ when $w \in C^{0,1}(\bar\Omega_\vare)$.

\begin{lemma}[Galerkin projection vs.~integral operator]
\label{lem:GvsI}
Let $w \in C^{0,1}(\bar\Omega_\vare)$. If the coefficient matrices are such that \eqref{eq:modulus} holds, then, for every $z \in \Nh$, we have
\begin{equation}
\label{eq:boundcalR}
  \left| \calR_{h,\vare}[w](z) \right| \lesssim \left( \frac{h}{\vare^2}|\log h| + \frac{\varpi(h)}\vare \right) \| w \|_{C^{0,1}(\bar\Omega_\vare)},
\end{equation}
where the hidden constant is independent of $h$, $\vare$ and $w$.
\end{lemma}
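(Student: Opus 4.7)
The plan is to split the integrand in \eqref{eq:defofcalR} via a triangle inequality, apply the continuity bound \eqref{eq:infsupabs} of the inf-sup operator to reduce to a pointwise estimate, and then combine the two previously established estimates: the Galerkin projection bound \eqref{estimateIe} and the continuity bound from Lemma~\ref{lem:continuous}.

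First, for $x \in \omega_z = \supp\phi_z$ and arbitrary $\alpha \in \calA$, $\beta \in \calB$, I would write
\[
  \left| \Ie{\calG_h w}(z) - \Ie{w}(x) \right|
  \leq \left| \Ie{\calG_h w}(z) - \Ie{w}(z) \right|
  + \left| \Ie{w}(z) - \Ie{w}(x) \right|.
\]
The first term is controlled uniformly in $\alpha,\beta$ by \eqref{estimateIe}, giving $\lesssim (h/\vare^2)|\log h|\,\|w\|_{C^{0,1}(\bar\Omega_\vare)}$. The second term is controlled by Lemma~\ref{lem:continuous} by $\lesssim (|x-z|/\vare^2 + \varpi(|x-z|)/\vare)\,|w|_{C^{0,1}(\bar\Omega_\vare)}$, and since $|x-z| \lesssim h$ on $\omega_z$ and $\varpi$ is nondecreasing, this is absorbed into $\lesssim (h/\vare^2 + \varpi(h)/\vare)\,\|w\|_{C^{0,1}(\bar\Omega_\vare)}$.

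Next, since the bound above is uniform in $\alpha$ and $\beta$, I invoke the continuity property \eqref{eq:infsupabs} of the inf-sup operator to conclude that, for every $x \in \omega_z$,
\[
  \left| \inf_{\alpha \in \calA}\sup_{\beta \in \calB} \Ie{\calG_h w}(z)
  - \inf_{\alpha \in \calA}\sup_{\beta \in \calB} \Ie{w}(x) \right|
  \lesssim \left( \frac{h}{\vare^2}|\log h| + \frac{\varpi(h)}{\vare} \right) \|w\|_{C^{0,1}(\bar\Omega_\vare)}.
\]

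Finally, since $\phi_z \geq 0$ and $\int_\Omega \phi_z \diff x > 0$, the quantity $\calR_{h,\vare}[w](z)$ is a weighted average (over $\omega_z$) of the pointwise difference estimated above. Taking absolute values inside the integral and using this bound yields \eqref{eq:boundcalR}. No step looks like a serious obstacle here: the content of the lemma is really just a packaging of \eqref{estimateIe} and Lemma~\ref{lem:continuous}, the only mildly technical point being the appeal to \eqref{eq:infsupabs} so that the uniform-in-$(\alpha,\beta)$ bound survives the inf-sup operation.
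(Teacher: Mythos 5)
Your proof is correct and follows essentially the same route as the paper's: decompose $\Ie{\calG_h w}(z) - \Ie{w}(x)$ into a projection error at $z$ (bounded by \eqref{estimateIe}) plus a spatial continuity error (bounded by Lemma~\ref{lem:continuous}), and then pass the uniform-in-$(\alpha,\beta)$ bound through the inf-sup via Lemma~\ref{lem:infsup}. The only difference is cosmetic—you spell out the appeal to \eqref{eq:infsupabs} and the final weighted-average step, which the paper leaves implicit.
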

\begin{proof}
Notice that, by Lemma~\ref{lem:infsup} it suffices to bound, for $x \in \supp \phi_z$,
\begin{align*}
  r^{\alpha,\beta}_{h,\vare}[w](z) &= \Ie{\calG_h w}(z) - \Ie{w}(x) \\
    &= \left( \Ie{\calG_h w}(z) - \Ie{w}(z) \right) + \left( \Ie{w}(z) - \Ie{w}(x) \right) \\
    &= r_{1,h,\vare}^{\alpha,\beta}[w](z) + r_{2,h,\vare}^{\alpha,\beta}[w](z).
\end{align*}
Estimate \eqref{estimateIe} immediately yields
\[
  |r_{1,h,\vare}^{\alpha,\beta}[w](z) | \lesssim \frac{h}{\vare^2} |\log h| \| w \|_{C^{0,1}(\bar\Omega_\vare)}.
\]
On the other hand, Lemma~\ref{lem:continuous} implies
\[
  |r_{2,h,\vare}^{\alpha,\beta}[w](z) | \lesssim \left( \frac{h}{\vare^2} + \frac{\varpi(h)}\vare \right) \| w \|_{C^{0,1}(\bar\Omega_\vare)},
\]
where we used that $|x-z| \lesssim h$.
\end{proof}

The monotonicity property given in Lemma~\ref{lem:monotone} ensures that, for every $\vare >0$ and $h>0$, the scheme \eqref{eq:isaacsepsh} has a unique solution. The main idea behind the the existence and uniqueness of the numerical solution is a discrete variant of Perron's method \cite[\S6.1]{MR2777537}.

\begin{theorem}[existence and uniqueness]
\label{lem:existenceuniqueness}
Let the family of meshes $\{\T_h\}_{h>0}$ be weakly acute. For every $h>0$ and $\vare>0$ the finite element scheme \eqref{eq:isaacsepsh} has a unique solution.
\end{theorem}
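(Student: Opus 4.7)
The plan is to adapt Perron's method to this discrete, finite-dimensional setting. Two structural facts drive the argument: the monotonicity of Lemma~\ref{lem:monotone}, playing the role of degenerate ellipticity, and the strict decrease of $t \mapsto \frakF_h^\vare[v_h + t\phi_z](z)$ in $t$, which follows because both $\LAP_h v_h(z)$ (through the positive coefficient $\int_\Omega |\GRAD\phi_z|^2$) and $\Ie{v_h}(z)$ for each $\alpha,\beta$ (through the $-2v_h(z)$ term in $\frakd v_h(z,y)$) strictly decrease in the nodal value $v_h(z)$. A third ingredient used repeatedly is the invariance of $\LAP_h$ and $\frakd$ under the addition of constants, so that $\frakF_h^\vare$ is shift-invariant.

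For existence, I would first exhibit a pair of discrete sub- and super-solutions $\underline u_h, \overline u_h \in \polV_h^0$ with $\underline u_h \leq \overline u_h$ nodewise and $\frakF_h^\vare[\underline u_h](z) \geq f_z \geq \frakF_h^\vare[\overline u_h](z)$ for every $z \in \Nh$. These can be built as suitable multiples of the solution of a linear discrete Poisson problem on the mesh, absorbing $\|\fe\|_{L^\infty(\Omega)}$ and the $L^\infty$ bound on $\Ie{\cdot}$. Define
\[
  \calS = \bigl\{v_h \in \polV_h^0 : \underline u_h \leq v_h \leq \overline u_h \text{ nodewise},\ \frakF_h^\vare[v_h](z) \geq f_z \ \forall z \in \Nh\bigr\},
\]
and observe that $\calS$ is closed under nodewise maximum: if $v,w \in \calS$ and $W = \max(v,w)$ nodewise, then at every $z$, say $W(z) = v(z)$, one has $W \geq v$ with equality at $z$, so Lemma~\ref{lem:monotone} yields $\frakF_h^\vare[W](z) \geq \frakF_h^\vare[v](z) \geq f_z$. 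Since $\calS$ is a compact subset of $\Real^{|\Nh|}$, the nodewise supremum $u_h^\vare(z) := \sup\{v_h(z) : v_h \in \calS\}$ is attained and, by closure under finite max, is itself the maximum element of $\calS$. To promote it to a solution, I argue by contradiction: if $\frakF_h^\vare[u_h^\vare](z_0) > f_{z_0}$ at some $z_0$, then necessarily $u_h^\vare(z_0) < \overline u_h(z_0)$ (otherwise Lemma~\ref{lem:monotone} gives $\frakF_h^\vare[u_h^\vare](z_0) \leq \frakF_h^\vare[\overline u_h](z_0) \leq f_{z_0}$, a contradiction), so for small $t > 0$ the perturbation $u_h^\vare + t\phi_{z_0}$ still lies below $\overline u_h$, lies in $\calS$ (by continuity, strict diagonal monotonicity at $z_0$, and Lemma~\ref{lem:monotone} at the remaining nodes), and strictly exceeds $u_h^\vare$, contradicting maximality.

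Uniqueness reduces to a discrete comparison principle: if $v, w \in \polV_h^0$ satisfy $\frakF_h^\vare[v](z) \geq \frakF_h^\vare[w](z)$ for every $z \in \Nh$, then $v \leq w$ on $\bar\Omega$. Suppose $\delta := \max_{\bar\Omega}(v-w) > 0$, attained at some interior node $z^\star \in \Nh$. Shift-invariance gives $\frakF_h^\vare[v-\delta](z^\star) = \frakF_h^\vare[v](z^\star)$, while $\tilde v := v-\delta$ (extended by $-\delta$ on $\Omega_\vare \setminus \Omega$) satisfies $\tilde v \leq w$ on $\Omega_\vare$ with equality at $z^\star$ and strict inequality $\tilde v = -\delta < 0 = w$ on $\partial\Omega$ and $\Omega_\vare \setminus \Omega$. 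Expanding the Laplacian difference in the Lagrange basis,
\[
  \LAP_h \tilde v(z^\star) - \LAP_h w(z^\star) = -\Bigl(\int_\Omega \phi_{z^\star}\Bigr)^{-1} \sum_{z'} (\tilde v - w)(z')\, a_{z^\star, z'}, \quad a_{z^\star, z'} := \int_\Omega \GRAD\phi_{z'} \cdot \GRAD\phi_{z^\star},
\]
and using weak acuteness ($a_{z^\star, z'} \leq 0$ for $z' \neq z^\star$) together with the row-sum identity $\sum_{z'} a_{z^\star, z'} = 0$, one shows that the nodal max-set $K := \{z \in \Nh \cup \Nh^{\partial} : (v-w)(z) = \delta\}$ is closed under the relation ``$z \to z'$ whenever $a_{z,z'} < 0$'': indeed, if $z \in K$ and $z' \notin K$ with $a_{z,z'} < 0$, then the $z'$-summand is strictly positive, and, combined with the monotonicity bound $\inf\sup \Ie{\tilde v}(z) \leq \inf\sup \Ie{w}(z)$ and shift-invariance, this yields $\frakF_h^\vare[v](z) < \frakF_h^\vare[w](z)$, contradicting the hypothesis. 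Propagating $K$ along the mesh graph to the boundary -- where $v-w = 0 \neq \delta$ -- produces a contradiction and proves $\delta \leq 0$. Applying the comparison principle in both directions to any pair of solutions gives uniqueness.

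The main obstacle is precisely this strictness step in the comparison argument: Lemma~\ref{lem:monotone} yields only weak monotonicity, so the proof must extract strict inequality from the strictly negative off-diagonal stiffness entries $a_{z,z'} < 0$ and then propagate it through the mesh graph to the boundary. This graph-connectivity is where the weak acuteness hypothesis does nontrivial work, and it can be complemented by a second source of strictness coming from the integro-differential term: when $z^\star$ lies within $Q\vare$ of $\partial\Omega$, the strict inequality $\tilde v < w$ on the ghost strip $\Omega_\vare \setminus \Omega$ makes $\frakd \tilde v(z^\star, y) < \frakd w(z^\star, y)$ on a set of positive $\varphi$-measure, so $\Ie{\tilde v}(z^\star) < \Ie{w}(z^\star)$ strictly and mesh propagation is not needed.
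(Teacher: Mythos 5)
Your overall strategy coincides with the paper's---Perron's method driven by the monotonicity of Lemma~\ref{lem:monotone}---but you run it from below (subsolutions, nodewise supremum) while the paper runs it from above (supersolutions, nodewise infimum); these are dual and logically interchangeable. Your uniqueness argument is in one respect \emph{more} careful than the paper's: you correctly observe that Lemma~\ref{lem:monotone} only produces weak inequalities at a non-strict maximum of $u_h - v_h$, whereas the paper's proof simply asserts ``we may assume $w_h$ attains a strict maximum'' without elaboration. Your propagation idea is the right instinct. Note, however, that weak acuteness permits vanishing off-diagonal stiffness entries, so the Laplacian-based propagation by itself need not reach the boundary. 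The nonlocal mechanism you mention only in passing is actually the more robust one: if $\frakd w_h(z^\star,y)=0$ for $\varphi$-a.e.~$y$ then $w_h$ equals its maximum on all of $B_{Q\vare}(z^\star)\cap\Omega_\vare$, and iterating this enlarges the flat set by a fixed amount $Q\vare$ at every step until it meets $\Omega_\vare\setminus\Omega$, where $w_h=0$; I would lead with that.

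The step that does not work as written is the construction of $\underline u_h,\overline u_h$. Taking $\overline u_h = C\psi_h$ for a discrete Poisson solution $\psi_h$ and ``absorbing the $L^\infty$ bound on $\Ie{\cdot}$'' fails by scaling: $\Ie{C\psi_h}=C\,\Ie{\psi_h}$, so the nonlocal contribution grows linearly in $C$ and can dominate $-\tfrac\lambda2 C$ whenever $\inf_\alpha\sup_\beta\Ie{\psi_h}(z)\geq\tfrac\lambda2$ at some node---and there is no reason to exclude this, since $\psi_h$ is not the projection of a function with sign-definite Hessian. The paper sidesteps this by projecting a concave quadratic $P$ with $\D^2 P=\lambda^{-1}FI$: by consistency for quadratics \eqref{quadratics}, $\Ie{P}(x)=\bigl(A^{\alpha,\beta}(x)-\tfrac\lambda2 I\bigr):\D^2 P(x)$ holds \emph{exactly}, so the sign of $\frakF[P]$ transfers to $\Fhe{\calG_h P}$ up to the consistency error $\calR_{h,\vare}[P]$, which Lemma~\ref{lem:GvsI} controls; enlarging $|F|$ by a fixed multiple of $\delta_{h,\vare}$ then yields $\Fhe{\calG_h P}(z)\leq f_z$ for all $z$. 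Replacing your discrete Poisson ansatz with this quadratic construction repairs the existence step, after which your sandwiching set $\calS$ is well-defined and the rest of your Perron argument goes through.
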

\begin{proof}
We first prove the existence of a solution in several steps.

\noindent \boxed{1} We define the set of discrete super-solutions
\[
  \polS_h =  \left\{ v_h \in \polV_h : \; \Fhe{v_h}(z) \leq f_{z} \  \forall z \in \Nh, \; v_h(z) \geq 0 \ \forall z \in \Nh^\partial \right\}.
\]
We claim that the set $\polS_h$ is nonempty. Set
\[
  \delta_{h,\vare} = C \left( \frac{h}{\vare^2}|\log h| + \frac{\varpi(h)}\vare \right),
\]
where $C>0$ is a constant to be chosen later. Let $F = \min \{0,\min_{\bar\Omega} \fe\}-\delta_{h,\vare}$ and $B_R$ be a ball centered at the origin of radius $R$ that contains $\Omega$. Let $P(x) =  \frac12 \lambda^{-1} F (|x|^2 - R^2)$ be a quadratic polynomial that is nonnegative in $B_R$, $P(x) \geq 0$ on $\partial \Omega$, and $\D^2 P= \lambda^{-1} F I \leq 0$. 
Owing to the uniform ellipticity condition \eqref{eq:ellipticity}, for each $\alpha$ and $\beta$, we obtain
\[
  A^{\alpha,\beta}(x):\D^2 P(x) \leq \lambda ( \lambda^{-1} F) \leq \fe(x) - \delta_{h,\vare}
  \quad \implies \quad
  \frakF[P](x) \leq  \fe(x) - \delta_{h,\vare}.
\]
We now claim that $\calG_h P \in \polS_h$. 
To see this consider
\begin{align*}
  \Fhe{\calG_h P}(z) &= \frac\lambda2 \LAP_h \calG_h P(z) + \inf_{\alpha \in \calA}\sup_{\beta \in \calB} \Ie{\calG_h P}(z) \\
  &= \left(\int_\Omega \phi_z(x) \diff x \right)^{-1} 
    \int_\Omega \left(  \LAP P(x) + \inf_{\alpha \in \calA}\sup_{\beta \in \calB} \Ie{P}(x) \right) \phi_z(x) \diff x \\
  &+ \calR_{h,\vare}[P](z).
\end{align*}
Using the consistency of $\Ie{\cdot}$ for quadratics, see Lemma~\ref{lem:approximation}, we conclude that
\begin{equation}
\label{eq:consQuads}
  \Fhe{\calG_h P}(z) = \frac{  \int_\Omega \frakF[P](x) \phi_z(x) \diff x }{ \int_\Omega \phi_z(x) \diff x } + \calR_{h,\vare}[P](z)
  \leq f_z - \delta_{h,\vare} + \calR_{h,\vare}[P](z).
\end{equation}
Lemma~\ref{lem:GvsI} and the fact that $\| P \|_{C^{0,1}(\bar\Omega_\vare)} \lesssim \| \fe \|_{L^\infty(\Omega)}$, show that we can choose $C>0$ sufficiently large so that $\calR_{h,\vare}[P](z)-\delta_{h,\vare} \leq 0$ for all $z \in \Nh$. Therefore,
\[
  \Fhe{\calG_h P}(z) \leq f_z \implies \calG_h P \in \polS_h.
\]
Thus, $\polS_h$ is nonempty.

\noindent \boxed{3} Given $v_h,w_h \in \polS_h$, we define $(v \wedge w)_h \in \Vh$ by
\[
  (v \wedge w)_h(z) = \min\{ v_h(z), w_h(z) \} \quad \forall z \in \Nh \cup \Nh^\partial.
\]
Since $v_h(z) \geq 0$ and $w_h(z) \geq 0$ on the boundary nodes $z \in \Nh^{\partial}$, we have 
\[
(v \wedge w)_h(z) \geq 0.
\]
Notice that, if there is $z^0 \in \Nh$ such that $(v \wedge w)_h(z^0) = w_h(z^0)$ we get
\[
  (v \wedge w)_h \leq w_h \qquad \text{and} \qquad (v \wedge w)_h(z^0) = w_h(z^0).
\]
The monotonicity result of Lemma~\ref{lem:monotone} implies that for all $z \in \Nh$
\[
  \Fhe {(v \wedge w)_h }(z) \leq \Fhe{w_h} (z) \leq f_{z},
\]
so that $(v \wedge w)_h \in \polS_h$.

\noindent \boxed{4} Let $u_h^\star \in \Vh$ be defined by
\[
  u^\star_h(z) = \inf_{v_h \in \polS_h} v_h(z) \quad \forall z \in \Nh.
\] 
The reasoning given above shows that $u_h^\star  \in \polS_h$. We claim that $u_h^\star$ is a solution, for if that is not the case, then either:

\noindent \textbf{Case I}: There is a boundary node $z\in \Nh^{\partial}$ such that $u_h^\star (z) > 0$. Define $v_h^\star = u_h^\star - \delta \phi_{z}$.  For $\delta>0$ sufficiently small, $v_h^\star (z) \geq 0$. The monotonicity of the scheme shown in Lemma~\ref{lem:monotone} implies that $v_h^\star$ is still a super-solution. This contradicts the fact that $u_h^\star$ is the smallest super-solution of \eqref{eq:isaacsepsh}.

\noindent \textbf{Case II}: There exists an interior node $z \in \Nh$ such that 
\[
 \Fhe {u_h^\star}(z) < f_{z}.
\]
We, again, define $v_h^\star = u_h^\star - \delta \phi_{z}$. For $\delta$ sufficiently small, 
\[
\Fhe {v_h^\star} (z) \leq f_{z}
\] 
and, thanks to Lemma~\ref{lem:monotone}, ${v_h^\star}$ is again a super-solution. In this case, we also obtain a contradiction.

\noindent This proves the existence of a solution.

We now prove uniqueness. If $u_h$ and $v_h$ are two solutions of the finite element scheme \eqref{eq:isaacsepsh}, set $w_h = u_h - v_h$. If $w_h \neq 0$ then, without loss of generality, we may assume that $w_h$ attains a strict maximum at the node $z \in \Nh$. At this node we have
\[
  \frac{\lambda}{2} \LAP_h w_h(z) < 0 \qquad \Ie{w_h} (z) < 0    \quad \forall \alpha \in \calA, \beta \in \calB,
\]
or, equivalently,
\begin{align*}
  \frac{\lambda}{2} \LAP_h{u_h}(z) +  \Ie{u_h}(z) <  \frac{\lambda}{2} \LAP_h{v_h}(z) +  \Ie{v_h}(z) + \frac\lambda2 \LAP_h w_h(z)
\end{align*}
for all $\alpha$ and $\beta$.
By the monotonicity of the inf-sup operator \eqref{eq:infsup}, we have
\[
  \Fhe{u_h}(z) \leq \Fhe{v_h}(z) + \frac\lambda2 \LAP_h w (z) < \Fhe{v_h}(z),
\]
which contradicts the fact that both $u_h$ and $v_h$ solve \eqref{eq:isaacsepsh}.
\end{proof}

\begin{remark}[alternate proof of existence]
\label{rem:otherex}
An alternative proof of existence of solutions will be given in Theorem~\ref{thm:convergenceHoward} via the convergence of Howard's algorithm.
\end{remark}

\subsection{Convergence to the viscosity solution}
\label{sub:convergence}
Combining the discrete maximum principle and consistency estimate with the result in \cite{BarlesSouganidis91, KuoTrudinger92}, we prove convergence of our scheme \eqref{eq:isaacsepsh}. For convenience, let us recall the convergence result of \cite{BarlesSouganidis91, KuoTrudinger92}.

\begin{lemma}[convergence criterion \cite{KuoTrudinger92}]
\label{lem:criterion}
Let $\Omega$ be a bounded domain in $\mathbb{R}^d$ satisfying an exterior cone condition. A consistent and monotone scheme converges uniformly to the viscosity solution \eqref{eq:isaacs}.
\end{lemma}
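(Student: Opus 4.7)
The plan is to follow the classical Barles--Souganidis half-relaxed limits machinery, adapted to our finite element setting. First I would establish a uniform $L^\infty$ bound on the family $\{u_h^\vare\}$, for example by testing against the quadratic barrier $\calG_h P$ constructed in the proof of Theorem~\ref{lem:existenceuniqueness} together with the discrete maximum principle coming from weak acuteness. With this uniform bound in hand, I would define the upper and lower half-relaxed limits
\[
  \bar u(x) := \limsup_{(y,h,\vare)\to(x,0,0)} u_h^\vare(y),
  \qquad
  \underline u(x) := \liminf_{(y,h,\vare)\to(x,0,0)} u_h^\vare(y),
\]
which are finite upper/lower semicontinuous functions on $\bar\Omega$ satisfying $\underline u \leq \bar u$.

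Next I would verify that $\bar u$ is a viscosity subsolution and $\underline u$ a supersolution of \eqref{eq:isaacs}. Consider a smooth test function $\phi$ such that $\bar u - \phi$ has a strict local maximum at $x_0 \in \Omega$. A standard perturbation argument (replacing $\phi$ by $\phi + |x-x_0|^4$ if needed) produces a sequence of nodes $z_{h,\vare} \to x_0$ and constants $c_{h,\vare} \to 0$ such that $u_h^\vare - \calG_h\phi$ attains a local maximum at $z_{h,\vare}$ with $u_h^\vare(z_{h,\vare}) \to \bar u(x_0)$. Monotonicity (Lemma~\ref{lem:monotone}) then gives
\[
  \Fhe{\calG_h\phi + c_{h,\vare}}(z_{h,\vare}) \leq \Fhe{u_h^\vare}(z_{h,\vare}) = f_{z_{h,\vare}} .
\]
The heart of the argument is to pass to the limit on the left-hand side; this is exactly where \emph{weak consistency} (in the sense of Jensen--Smears) is used, together with estimate \eqref{estimateIe}, Lemma~\ref{lem:GvsI}, and the consistency of $\Ie{\cdot}$ on quadratics (Lemma~\ref{lem:approximation}). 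In the scaling regime $\vare \gtrsim h^{1/2}|\log h|$ the correction term $\calR_{h,\vare}[\phi](z_{h,\vare})$ vanishes in the limit, so we recover $\frakF[\phi](x_0) \leq f(x_0)$. The supersolution inequality for $\underline u$ is symmetric.

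To handle the boundary I would check that $\bar u \leq 0 \leq \underline u$ on $\partial\Omega$. This reduces to constructing local discrete barriers at each boundary point and invoking the exterior cone condition in the standard way. Finally, by the comparison principle for the Isaacs equation with uniformly continuous coefficients, $\bar u \leq \underline u$ on $\bar\Omega$. Combined with $\underline u \leq \bar u$ this yields $\bar u = \underline u =: \ue$, and a standard equicontinuity argument upgrades this equality of semicontinuous envelopes to uniform convergence $u_h^\vare \to \ue$ on $\bar\Omega$.

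I expect the subsolution/supersolution verification to be the main obstacle, specifically the passage to the limit in $\inf_\alpha\sup_\beta \Ie{\cdot}$. The difficulty is that consistency of $\Ie{\cdot}$ holds pointwise on smooth functions only in the integro-differential sense (not for $\frakF[\phi]$ directly), so the argument requires combining it with the convergence $u^\vare \to \ue$ from Proposition~\ref{prop:CS} to match the correct limiting operator. Handling this simultaneously with the inf-sup (using the continuity estimate \eqref{eq:infsupabs} of Lemma~\ref{lem:infsup}) is the delicate point; everything else is routine given monotonicity and the stability bound.
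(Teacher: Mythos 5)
The paper does not actually prove Lemma~\ref{lem:criterion}: it is quoted verbatim from \cite{KuoTrudinger92} (and \cite{BarlesSouganidis91}) and used as a black box. What the paper does prove, in Theorem~\ref{thm:convergence} and Corollary~\ref{cor:convergence}, is that the specific scheme \eqref{eq:isaacsepsh} satisfies the hypotheses (monotonicity, weak consistency) and that the half-relaxed limits coincide with $\ue$. Your write-up conflates the two layers: you are reproducing the Barles--Souganidis half-relaxed-limit machinery (which is indeed the proof of the abstract criterion) while simultaneously plugging in the scheme-specific quantities $\calG_h\phi$, $\calR_{h,\vare}$, Lemma~\ref{lem:GvsI}, the scaling $\vare\gtrsim h^{1/2}|\log h|$, etc.\ --- material the paper relegates to the subsequent theorem and corollary. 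The outline you give (uniform bound, half-relaxed limits, sub-/super-solution property via monotonicity plus consistency, boundary barriers under the exterior cone condition, strong comparison, upgrade to uniform convergence) is the standard proof and is correct in spirit.

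The one place where you have talked yourself into a difficulty that is not actually there is the final paragraph. You worry that consistency of $\Ie{\cdot}$ is ``only in the integro-differential sense,'' so that identifying the limiting operator as $\frakF$ rather than $\frakF^\vare$ requires a detour through Proposition~\ref{prop:CS}. This is not the case, and the paper's Theorem~\ref{thm:convergence} is structured to avoid exactly that: for second-order viscosity solutions it suffices to test against quadratic polynomials $w\in\polP_2$, and for quadratics Lemma~\ref{lem:approximation}(a) gives $\Ie{w}(x)=(A^{\alpha,\beta}(x)-\frac\lambda2 I):\D^2 w$ \emph{exactly}, for every $\vare$. Adding the $\frac\lambda2\LAP$ piece one recovers $\frakF[w]$ exactly, and all the $\vare,h$ error is concentrated in $\calR_{h,\vare}[w]$, which is controlled by Lemma~\ref{lem:GvsI}. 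Thus the scheme is consistent with $\frakF$ directly, not merely with $\frakF^\vare$, and Proposition~\ref{prop:CS} is only needed later for the quantitative rate (Theorem~\ref{thm:rate}), not for the qualitative convergence you are proving here. Replacing the general smooth test function $\phi$ (and the $|x-x_0|^4$ perturbation trick) by a quadratic strict-maximum test function both simplifies the argument and removes the ``delicate point'' you flag.

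One minor additional remark: once $\bar u=\underline u$ with a continuous common value, locally uniform convergence follows directly from the definitions of the half-relaxed limits; no separate equicontinuity argument is needed.
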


Our convergence result then follows from Lemma~\ref{lem:criterion}. For brevity we only sketch the proof, as it repeats standard arguments.

\begin{theorem}[convergence]
\label{thm:convergence}
Assume that $\fe \in C^{0,1}(\bar\Omega)$ and that the coefficients are such that \eqref{eq:ellipticity} and \eqref{eq:modulus} hold. Let $u_h^\vare \in \polV_h^0$ be the solution to \eqref{eq:isaacsepsh}. Denote
\[
  u^\star(x) = \limsup_{h\to0,z \to x} u_h^\vare(z), \qquad
  u_\star(x) = \liminf_{h\to0,z \to x} u_h^\vare(z),
\]
for any sequence $z \to x$ and let 
\[
  \vare \gtrsim \max\{ h^{1/2} |\log h|, \varpi(h) \},
\]
and $\vare \to 0$. Then the upper semi-continuous function $u^\star$ is a viscosity subsolution of \eqref{eq:isaacs} and the lower semi-continuous function $u_\star$ is a viscosity supersolution of \eqref{eq:isaacs}.
\end{theorem}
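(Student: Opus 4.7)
The plan is to apply Lemma~\ref{lem:criterion}: monotonicity is already in hand via Lemma~\ref{lem:monotone}, so I would verify (i) uniform $L^\infty$ stability of $\{u_h^\vare\}$ and (ii) consistency of the scheme in the Barles--Souganidis sense. Stability comes from the barrier used in the existence proof of Theorem~\ref{lem:existenceuniqueness}: the Galerkin projection of the quadratic $P(x)=\tfrac12\lambda^{-1}F(|x|^2-R^2)$ with $|F|\lesssim\|\fe\|_{L^\infty}$ is a discrete supersolution and $-\calG_h P$ is a subsolution, so the maximum principle argument appearing in the uniqueness proof yields $\|u_h^\vare\|_{L^\infty(\Omega)}\lesssim\|\fe\|_{L^\infty(\Omega)}$ uniformly in $h,\vare$. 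This ensures that $u^\star$ and $u_\star$ are finite and bounded, and a standard boundary barrier gives $u^\star=u_\star=0$ on $\partial\Omega$.

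The core step is consistency: for every $\varphi\in C^\infty(\bar\Omega_\vare)$, every $x_0\in\Omega$, and every sequence $z_n\in\N_{h_n}$ with $z_n\to x_0$, $h_n\to 0$, and $\vare_n\to 0$ in the stated regime, I would show
\[
  \frakF_{h_n}^{\vare_n}[\calG_{h_n}\varphi](z_n) \longrightarrow \frakF[\varphi](x_0).
\]
The linear Laplacian piece is immediate: by the identity recorded just below \eqref{eq:defofGh}, $\LAP_{h_n}\calG_{h_n}\varphi(z_n)$ is a $\phi_{z_n}$-weighted average of the continuous function $\LAP\varphi$, which converges to $\LAP\varphi(x_0)$. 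For the nonlinear piece I would, uniformly in $(\alpha,\beta)$, add and subtract $\Ie[\varphi](z_n)$ and $\Ie[\varphi](x_0)$ to split the error $\Ie[\calG_{h_n}\varphi](z_n)-(A^{\alpha,\beta}(x_0)-\tfrac\lambda2 I)\!:\!\D^2\varphi(x_0)$ into three pieces: the Galerkin correction controlled by \eqref{estimateIe}, the spatial shift controlled by Lemma~\ref{lem:continuous}, and the pointwise integro-differential limit furnished by Lemma~\ref{lem:approximation}(b). Lemma~\ref{lem:infsup} then lifts these $(\alpha,\beta)$-uniform bounds to the inf-sup. The scaling $\vare\gtrsim\max\{h^{1/2}|\log h|,\varpi(h)\}$ is precisely what drives both $\tfrac{h}{\vare^2}|\log h|$ and $\tfrac{\varpi(h)}{\vare}$ to zero along the subsequence.

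With monotonicity, stability, and consistency in place, I would run the standard Barles--Souganidis argument: if $u^\star-\varphi$ has a strict local maximum at $x_0\in\Omega$, compactness produces nodes $z^n\to x_0$ at which $u_{h_n}^{\vare_n}-\calG_{h_n}\varphi$ attains a nodal local maximum, say with value $\xi_n\to 0$, so $u_{h_n}^{\vare_n}\leq\calG_{h_n}\varphi+\xi_n$ locally with equality at $z^n$. Since $\LAP_h$ and $\frakd$ annihilate constants, $\frakF_h^\vare$ is invariant under additive constants; monotonicity (Lemma~\ref{lem:monotone}) then gives $f_{z^n}=\frakF_{h_n}^{\vare_n}[u_{h_n}^{\vare_n}](z^n)\leq\frakF_{h_n}^{\vare_n}[\calG_{h_n}\varphi](z^n)$, and passing to the limit via the consistency above yields $\fe(x_0)\leq\frakF[\varphi](x_0)$, the subsolution property for $u^\star$. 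The supersolution argument for $u_\star$ is symmetric. The main obstacle is the consistency step: the $\vare^{-2}$ normalization inside $\Ie$ is what forces $\vare\gtrsim h^{1/2}|\log h|$ to defeat the $O(h|\log h|)$ Galerkin error of \eqref{eq:Galproj}, while $\vare\gtrsim\varpi(h)$ is needed to absorb the spatial oscillation of $M^{\alpha,\beta}$ appearing in Lemma~\ref{lem:continuous}.
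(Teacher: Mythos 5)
Your overall architecture (monotonicity + stability + consistency, then Barles--Souganidis) is the right one and matches the paper's, and your observations about $L^\infty$ stability and constant-invariance of $\frakF_h^\vare$ are correct and indeed left implicit in the paper. However, there is a genuine gap in your consistency step. You propose to split $\Ie{\calG_{h_n}\varphi}(z_n)-(A^{\alpha,\beta}(x_0)-\tfrac\lambda2 I):\D^2\varphi(x_0)$ into three pieces, with the ``spatial shift'' $\Ie{\varphi}(z_n)-\Ie{\varphi}(x_0)$ controlled by Lemma~\ref{lem:continuous}. But Lemma~\ref{lem:continuous} gives a bound of order $|z_n-x_0|\,\vare_n^{-2}+\varpi(|z_n-x_0|)\,\vare_n^{-1}$, and the nodes $z_n$ coming out of the Barles--Souganidis maximization satisfy only $z_n\to x_0$ with \emph{no prescribed rate} relative to $\vare_n$. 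Since $\vare_n\to 0$, this term need not vanish, so the estimate as stated does not close. The scaling $\vare\gtrsim\max\{h^{1/2}|\log h|,\varpi(h)\}$ controls the Galerkin-projection piece of Lemma~\ref{lem:GvsI}, where the spatial shift is only over $\mathrm{supp}\,\phi_z$ (distance $\lesssim h$); it gives you nothing on the uncontrolled distance $|z_n-x_0|$.

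The paper sidesteps this entirely by taking test functions $w\in\polP_2$. For quadratics, Lemma~\ref{lem:approximation}(a) gives \emph{exact} consistency, $\Ie{w}(x)=(A^{\alpha,\beta}(x)-\tfrac\lambda2 I):\D^2 w(x)$ for every $\vare>0$, so there is no ``(b)-type'' $\vare$-limit error at all. The computation of Step \boxed{1} of Theorem~\ref{lem:existenceuniqueness} then writes $\Fhe{\calG_h w}(z)$ as a $\phi_z$-weighted average of $\frakF[w]$ over $\mathrm{supp}\,\phi_z$ plus the remainder $\calR_{h,\vare}[w](z)$, the average converges to $\frakF[w](x_0)$ by mere continuity of $x\mapsto\frakF[w](x)$ as $z\to x_0$, and $\calR_{h,\vare}$ is killed by Lemma~\ref{lem:GvsI} together with the scaling. (Restricting to quadratic test functions is legitimate in the viscosity framework for second-order equations.) If you prefer to keep general $C^\infty$ test functions, you must replace your piece (b)+(c) by first comparing $\Ie{\varphi}(z_n)$ to $(A^{\alpha,\beta}(z_n)-\tfrac\lambda2 I):\D^2\varphi(z_n)$ via a version of Lemma~\ref{lem:approximation}(b) that is \emph{uniform in} $x$ (which holds when $\D^2\varphi$ is uniformly continuous), and then invoke continuity of $x\mapsto(A^{\alpha,\beta}(x)-\tfrac\lambda2 I):\D^2\varphi(x)$ to pass from $z_n$ to $x_0$; Lemma~\ref{lem:continuous} is the wrong tool for that step.
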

\begin{proof}
We only show that $u^\star$ is a subsolution. Let $w$ be a quadratic polynomial and such that $u^\star - w$ has a local maximum at $x_0 \in \Omega$. We need to show that
\[
  \frakF[w](x_0) \geq \fe(x_0).
\]
Without loss of generality 
we can assume that this is a strict maximum. As in the proof of \cite[Lemma 4.1]{RHNWZ} we can show that if $u_h^\vare - \calG_h w$ attains its maximum at $z \in \Nh$, then we must have $z \to x_0$ as $h \to 0$.

Lemma~\ref{lem:monotone} and the fact that $u_h^\vare - \calG_h w$ attains its maximum at $z$ yield
\[
  \frakF_h^\vare[\calG_h w](z) \geq \frakF_h^\vare[u_h^\vare](z) = f_z.
\]
Since $\fe$ is continuous $f_z \to \fe(x_0)$ and so it remains to study the behavior of the left hand side in this inequality.

Repeating the computations of Step \boxed{1} in the proof of Theorem~\ref{lem:existenceuniqueness} that led to \eqref{eq:consQuads} allows us to obtain
\[
  \Fhe{\calG_h w}(z) = \frac{  \int_\Omega \frakF[w](x) \phi_z(x) \diff x }{ \int_\Omega \phi_z(x) \diff x } + \calR_{h,\vare}[w](z),
\]
where $\calR_{h,\vare}[w](z)$ is defined in \eqref{eq:defofcalR}.
Consequently, if we are able to show that, for every $z \in \Nh$, $\calR_{h,\vare}[w](z) \to 0$ as $h\to 0$, $\vare \to 0$ we would get that
\[
  \fe(x_0) \leq \limsup_{h \to 0} \frakF_h^\vare[\calG_h w](z) \leq \frakF[w](x_0),
\]
which is what we need to prove. The bound on $\calR_{h,\vare}[w](z)$, obtained in Lemma~\ref{lem:GvsI}, and the choice of $\vare$ allow us to conclude.
\end{proof}

Classical arguments involving comparison principles allow us now to show convergence to the unique viscosity solution.

\begin{corollary}[convergence]
\label{cor:convergence}
The family $\{u_h^\vare\}_{h>0,\vare>0}$ of solutions to \eqref{eq:isaacsepsh} converges pointwise to the viscosity solution $\ue$ of \eqref{eq:isaacs} as $\vare, h \to 0$, provided that $\vare \gtrsim \max\{ h^{1/2} |\log h|, \varpi(h) \}$.
\end{corollary}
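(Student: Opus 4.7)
The plan is to combine Theorem~\ref{thm:convergence} with the comparison principle for the Isaacs equation in the standard Barles--Souganidis manner. Define $u^\star$ and $u_\star$ as in Theorem~\ref{thm:convergence}. Trivially $u_\star \leq u^\star$ in $\bar\Omega$ by construction, and Theorem~\ref{thm:convergence} already provides the interior information: $u^\star$ is an upper semicontinuous viscosity subsolution and $u_\star$ is a lower semicontinuous viscosity supersolution of \eqref{eq:isaacs} on $\Omega$. Once I establish the matching boundary behavior $u^\star \leq 0 \leq u_\star$ on $\partial\Omega$, the comparison principle for \eqref{eq:isaacs} (which holds because $\frakF$ is proper and uniformly elliptic, and $\Omega$ is convex hence satisfies the exterior cone condition) yields $u^\star \leq u_\star$ in $\bar\Omega$. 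Combined with the reverse inequality this gives $u^\star = u_\star =: u$, so $u$ is continuous and is simultaneously a sub- and supersolution of \eqref{eq:isaacs} with zero boundary data, hence $u = \ue$ by uniqueness. Pointwise (in fact locally uniform) convergence $u_h^\vare \to \ue$ then follows from the definitions of $u^\star$ and $u_\star$.

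The only nontrivial step is the boundary behavior, since the scheme merely enforces $u_h^\vare(z)=0$ at boundary nodes and this does not immediately pass to the upper/lower envelopes. I would handle this with a discrete barrier argument: fix $x_0 \in \partial\Omega$ and, using the uniform exterior ball/cone property of the convex domain $\Omega$, pick a ball $B_r(y_0)$ with $B_r(y_0)\cap\bar\Omega = \{x_0\}$. Consider a quadratic of the form $P(x) = K(|x-y_0|^2 - r^2)$ (and its negative); for $K$ sufficiently large, $P$ satisfies $\frakF[P](x) \geq \|\fe\|_{L^\infty} + \delta_{h,\vare}$ pointwise and vanishes at $x_0$. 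Applying the Galerkin projection $\calG_h P$ and invoking the quadratic consistency identity \eqref{eq:consQuads} together with the bound of Lemma~\ref{lem:GvsI} shows that $\calG_h P$ is a discrete supersolution that dominates $u_h^\vare$ on $\partial\Omega$. The discrete comparison principle, following from monotonicity (Lemma~\ref{lem:monotone}) by the same argument as in the uniqueness proof of Theorem~\ref{lem:existenceuniqueness}, gives $u_h^\vare \leq \calG_h P$ throughout $\Omega$. Symmetrically $-\calG_h P \leq u_h^\vare$. Passing to the upper and lower limits yields $|u^\star(x_0)|, |u_\star(x_0)| \leq |P(x_0)| = 0$, i.e.\ both envelopes vanish on $\partial\Omega$.

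With boundary behavior in hand I would invoke the comparison principle for viscosity solutions of the Isaacs equation \eqref{eq:isaacs}: for $u^\star$ an USC subsolution and $u_\star$ an LSC supersolution with $u^\star \leq u_\star$ on $\partial\Omega$, one has $u^\star \leq u_\star$ in $\bar\Omega$. Combined with $u_\star \leq u^\star$ this forces $u^\star = u_\star$ on $\bar\Omega$. Denoting this common value by $u$, it is continuous, zero on $\partial\Omega$, and is both a sub- and a supersolution, hence a viscosity solution of \eqref{eq:isaacs}; uniqueness of the viscosity solution yields $u = \ue$. The pointwise convergence statement $u_h^\vare(z) \to \ue(x)$ for $z \to x$ then follows directly from $u^\star = u_\star = \ue$ and the definitions of the semicontinuous envelopes.

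The main obstacle is cleanly carrying out the discrete barrier step to obtain boundary behavior, since the two-scale nature of the scheme means barriers must be checked with respect to $\frakF_h^\vare$ and controlled uniformly in both $h$ and $\vare$ (via the smallness of $\delta_{h,\vare}$ under the regime $\vare \gtrsim \max\{h^{1/2}|\log h|,\varpi(h)\}$). Everything else is a routine application of the machinery already established in Sections~\ref{sub:dmp}--\ref{sub:convergence}.
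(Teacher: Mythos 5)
Your high-level strategy matches the paper's: invoke Theorem~\ref{thm:convergence}, then appeal to a comparison principle and the boundary behavior $u^\star = u_\star = 0$ on $\partial\Omega$ to identify the two envelopes with $\ue$. The paper's own proof is terse (it simply asserts the boundary equalities), and you rightly identify the boundary behavior as the nontrivial point. However, the concrete barrier you propose does not work.

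The quadratic $P(x) = K(|x-y_0|^2 - r^2)$ with $K>0$ has $\D^2 P = 2KI > 0$, so $\frakF[P] = \inf_\alpha \sup_\beta A^{\alpha,\beta}:2KI \geq 2K\lambda d > 0$. By the quadratic consistency identity \eqref{eq:consQuads}, $\calG_h P$ is then a discrete \emph{sub}solution ($\Fhe{\calG_h P}(z) \geq f_z$ for $K$ large), not a supersolution. The discrete comparison argument therefore gives $u_h^\vare \geq \calG_h P$, the wrong direction. Switching to $-P$ fixes the sign of the Hessian but then $-P \leq 0$ on $\bar\Omega$, so $-\calG_h P$ fails to dominate $u_h^\vare = 0$ at the boundary nodes. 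In fact no quadratic can simultaneously vanish at $x_0$, be nonnegative on $\bar\Omega$, and satisfy $\frakF[\cdot] \leq \fe - \delta_{h,\vare}$: a quadratic attaining a minimum over $\bar\Omega$ at a boundary point and nonnegative near that point forces a Hessian that is positive in the tangential directions, and since $\D^2 P$ is constant, this propagates and ruins the supersolution inequality. The standard remedy is a genuinely non-quadratic barrier, e.g.\ $\psi(x) = K\bigl(r^{-\mu} - |x-y_0|^{-\mu}\bigr)$ with $\mu > \Lambda d / \lambda - 2$, which has one large negative (radial) eigenvalue dominating the $d-1$ positive tangential ones under \eqref{eq:ellipticity}. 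This introduces a second issue you would need to address: exact consistency of $\Ie{\cdot}$ (Lemma~\ref{lem:approximation}(a)) holds only for quadratics, so for $\psi$ you would have to invoke the asymptotic consistency (Lemma~\ref{lem:approximation}(b)) or work out the additional $O(\vare^s)$ truncation error, and check it vanishes in the regime $\vare \gtrsim \max\{h^{1/2}|\log h|, \varpi(h)\}$.
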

\begin{proof}
Using the notation of Theorem~\ref{thm:convergence} we note that, by definition $u_\star \leq u^\star$. Moreover, a comparison principle and the fact that $u_\star = u^\star = 0$ on $\partial\Omega$ readily imply that $u^\star = u_\star =\ue$, the viscosity solution of \eqref{eq:isaacs}. This proves convergence of $u_h^\vare$ to $\ue$.
\end{proof}

\section{Rate of convergence}
\label{sec:rates}

Let us now study the rate of convergence of $u_h^\vare$ to $\ue$, under the smoothness assumptions of Corollary~\ref{cor:reg}. We will achieve this by comparing the solution of the discrete scheme \eqref{eq:isaacsepsh} with the solution of the integrodifferential approximation \eqref{eq:isaacseps}. In light of the technique that we are adopting, we must immediately note that since the approximation results of Proposition~\ref{prop:CS} and, as a consequence, the regularity obtained in Corollary~\ref{cor:reg} only apply in the case of constant coefficients, $A^{\alpha,\beta}(x) = A^{\alpha,\beta}$, in what follows we must assume this. In this setting \eqref{eq:modulus} is trivially satisfied with $\varpi \equiv 0$. It is possible that the results of \cite{MR3412399} can be used to extend Proposition~\ref{prop:CS} to variable coefficients and if that is the case, our results will immediately follow as well.

We finally remark that since the relation $\vare \gtrsim h^{1/2} |\log h|$ is required for convergence of our method we will study the rate of convergence under this assumption.

\subsection{The discrete Alexandrov Bakelman Pucci estimate}
\label{sub:ABPh}

We begin by recalling the fundamental discrete ABP estimate of \cite[Lemma 6.1]{RHNWZ}. Let $B_R$ be a ball of radius $R$ which contains the domain $\Omega$. 
We define the convex envelope of a function $v_h \in \Vh^0$
\begin{equation}\label{convexenvelope}
  \Gamma (v_h) (x) := \sup\left\{ L(x) : L \in \polP_1, \ L \leq -v_h^- \text{ in } B_{R} \right\},
\end{equation}
where we denote by $v_h^-$ the negative part of $v_h$ in $\Omega$ and $v_h^- :=0$ in $B_R \setminus \Omega$. We also define the \emph{(lower) nodal contact set}
\begin{align}\label{contactset}
  \calC_h^-(v_h) = \left\{ z \in \Nh : \Gamma(v_h)(z) = v_h(z) \right\}.
\end{align}

\begin{lemma}[discrete ABP estimate]
\label{lem:ABPh}
Let the mesh $\T_h$ be shape regular and such that it allows a discrete maximum principle for the discrete Laplacian. If $v_h \in \polV_h^0$
satisfies
\[
 \LAP_h v_h(z) \leq f_{z} \quad \forall z \in \Nh,
\]
then
\[
  \sup_\Omega v_h^- \lesssim \left( \sum_{z \in \calC^-(v_h) } |f_z|^d |\omega_z| \right)^{1/d},
\]
where the hidden constant is independent of $h$ and $\omega_{z} = \supp \phi_z$.
\end{lemma}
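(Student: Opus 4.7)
The plan is to imitate the classical Alexandrov--Bakelman--Pucci maximum principle, transplanted to the polyhedral setting. Write $M := \sup_\Omega v_h^-$. Two ingredients will suffice: a geometric lower bound controlling $M$ by the $d$-dimensional volume of the subdifferential image $\partial\Gamma(v_h)(B_R)$, and a localization of this image at the nodal contact set together with a nodewise bound. For the first, since $v_h \in \polV_h^0$ vanishes on $\partial\Omega$ and is extended by zero to $B_R\setminus\Omega$, we have $\Gamma(v_h)\leq 0$ in $B_R$ with $\min_{B_R}\Gamma(v_h) = -M$. A standard convex-analysis lemma, comparing $\Gamma(v_h)$ with the affine cone joining its minimizer to $\partial B_R$, shows that $\partial\Gamma(v_h)(B_R)$ contains a ball of radius $c\,M/R$, and hence $M^d \lesssim R^d\,|\partial\Gamma(v_h)(B_R)|$.

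Next I would localize the subdifferential image at the nodal contact set. Because $v_h$ is piecewise affine on the shape-regular mesh $\T_h$, the convex envelope $\Gamma(v_h)$ is itself a convex piecewise affine function, and its subdifferential has positive $d$-dimensional Lebesgue measure only at its vertices (on lower-dimensional faces of its polyhedral subdivision the subdifferential is a lower-dimensional set). Since $\Gamma(v_h)$ is affine on any open set on which $\Gamma(v_h)<v_h$, every vertex of $\Gamma(v_h)$ must lie in the contact set $\calC^-(v_h)$. Consequently,
\[
|\partial\Gamma(v_h)(B_R)| \leq \sum_{z\in\calC^-(v_h)} |\partial\Gamma(v_h)(z)|,
\]
and the proof reduces to the nodewise estimate $|\partial\Gamma(v_h)(z)|\lesssim |f_z|^d |\omega_z|$ at each $z\in\calC^-(v_h)$.

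The nodewise estimate is the heart of the argument. At a contact node $z$, the set $\partial\Gamma(v_h)(z)$ is the convex hull of the gradients of the affine pieces of $\Gamma(v_h)$ meeting at $z$; these gradients can be expressed through the differences of $v_h$ between $z$ and its neighbors in $\omega_z$, and the diameter of the polytope in each direction is controlled by the corresponding second difference of $v_h$ at $z$. By weak acuteness, the stiffness-matrix entries defining $\LAP_h$ are nonnegative, so the hypothesis $\LAP_h v_h(z)\leq f_z$ is a positive-weighted upper bound on a trace-type combination of these second differences. An arithmetic--geometric mean inequality then converts this trace bound into a determinant bound; after accounting for the Jacobian scaling, which by shape regularity is of order $|\omega_z|$, one arrives at the nodewise estimate, and combining it with the previous steps yields the claim.

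The main obstacle is precisely this last step: producing a genuine $d$-dimensional volumetric bound on the polytope $\partial\Gamma(v_h)(z)$ from the single scalar inequality $\LAP_h v_h(z)\leq f_z$ requires a quantitative use of both the convexity of $\Gamma(v_h)$ at the contact node and the nonnegativity of the discrete Laplacian weights. This interplay between convexity and the variational finite element stencil has no direct analogue in the continuous ABP proof and is what forces the shape-regular, weakly acute hypotheses on the mesh.
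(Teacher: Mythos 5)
Note first that the paper does not actually prove this lemma; it is recalled verbatim from \cite[Lemma~6.1]{RHNWZ}, so there is no ``paper's own proof'' to match against. Your three-step architecture (volume lower bound via a cone comparison, localization of $\partial\Gamma(v_h)$ at vertices, then a nodewise bound) is the standard discrete ABP skeleton and is indeed the route taken in \cite{RHNWZ}, so on that level you are aligned with the source. Step two is essentially right, although the cleanest justification is to observe that $\Gamma(v_h)$ equals the lower convex hull of the finite point set $\{(z,-v_h^-(z)):z\in\Nh\}\cup(\partial B_R\times\{0\})$, so its vertices are automatically mesh nodes at which $\Gamma(v_h)=v_h$.

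The genuine gap is in the nodewise estimate, and two things go wrong. First, your claim that the gradients of the affine pieces of $\Gamma(v_h)$ meeting at $z$ ``can be expressed through the differences of $v_h$ between $z$ and its neighbors in $\omega_z$'' is false: those pieces are determined by the global convex envelope (their supporting contact points can be far from $z$, e.g.\ a single deep node produces a cone reaching $\partial B_R$). What is true is a containment: since $v_h\ge\Gamma(v_h)$ with equality at $z$, every $p\in\partial\Gamma(v_h)(z)$ obeys $p\cdot(y-z)\le v_h(y)-v_h(z)$ for each node $y$, so the subdifferential sits inside the polytope cut out by those finitely many linear constraints. Second, even granting this, passing to $|\partial\Gamma(v_h)(z)|\lesssim|f_z|^d|\omega_z|$ requires an argument you do not supply. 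One first needs $\LAP_h v_h(z)\ge 0$ at contact nodes (use $-a_{zy}\ge 0$ for $y\ne z$, the subgradient inequality with some $p\in\partial\Gamma(v_h)(z)$, and the fact that $\LAP_h$ annihilates affine functions at interior nodes); only then does $\LAP_h v_h(z)\le f_z$ give $0\le\LAP_h v_h(z)\le|f_z|$. One then has to show that the volume of the containing polytope is $\lesssim(\LAP_h v_h(z))^d|\omega_z|$, and here the difficulty is that $\LAP_h v_h(z)$ is a weighted sum of the quantities $v_h(y)-v_h(z)$ with stiffness weights, while the polytope's facets are parametrized by the edge directions $y-z$: reconciling the two (showing the stencil provides enough well-separated directions with weights of the right size to run an AM--GM bound) is precisely where shape regularity and weak acuteness do real work. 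Your sketch flags this as ``the main obstacle'' but does not resolve it; it is the substance of the lemma, not a detail to defer.
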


This inequality gives us an estimate on the negative part of $v_h$, while a bound for its positive part can be derived in the same fashion by considering a concave envelope and the corresponding (upper) contact set. Combining these bounds yields that the $L^{\infty}$-norm of $v_h$ is controled by the $L^d_h$-norm of $\{f_z\}_{z \in \Nh}$. The discrete ABP estimate, moreover, will allow us to establish rates of convergence.

\subsection{The error equation}
\label{sub:erroreq}
We now derive an equation to determine the error. Multiply the integro-differential approximation to the Isaacs equation \eqref{eq:isaacseps} by a test function $\phi_{z}$, integrate over $\Omega$ and divide by $\int_\Omega \phi_{z}(x) \diff x$.
In view of the definition of the discrete Laplacian \eqref{discretelaplace} and of the Galerkin projection \eqref{eq:defofGh}, we obtain
\[
  \frac\lambda2 \LAP_h \calG_h u^\vare(z) + \inf_{\alpha \in \calA} \sup_{\beta \in \calB} \Ie{\calG_h u^\vare}(z) = f_{z} + \calR_{h,\vare}[u^\vare](z).
\]
Subtract this equation from the scheme \eqref{eq:isaacsepsh} to obtain, for all $z \in \Nh$,
\begin{equation}
\label{eq:erreq}
  \begin{aligned}
  \frac\lambda2 \LAP_h \left(\calG_h u^\vare - u_h^\vare\right)(z) &+ 
  \inf_{\alpha \in \calA}\sup_{\beta\in\calB} \Ie{\calG_h u^\vare}(z) \\
  &- \inf_{\alpha \in \calA}\sup_{\beta\in\calB} \Ie{u^\vare_h}(z) =
  \calR_{h,\vare}[u^\vare](z).
  \end{aligned}
\end{equation}

\subsection{Rate of convergence}
\label{sub:rate}

With the error equation \eqref{eq:erreq} at hand, we now readily obtain an error estimate. This is the content of the next result.

\begin{theorem}[rate of convergence]
\label{thm:rate}
Let $\ue \in C^{1,s}(\Omega)\cap C^{0,1}(\bar\Omega)$ be the viscosity solution of the Isaacs equation \eqref{eq:isaacs} and $u_h^\vare \in \polV_h^0$ be the solution of \eqref{eq:isaacsepsh}. Choose $\vare \gtrsim h^{1/2}|\log h|$. Then, there is $\sigma>0$ such that
\[
  \| \ue - u_h^\vare \|_{L^\infty(\Omega)} \lesssim \left( \vare^\sigma + \frac{h}{\vare^2} |\log h| \right)\| \fe \|_{C^{0,1}(\bar\Omega)},
\]
where the hidden constant depends on $\lambda$ and $\Lambda$, but is independent of $\vare$ and $h$.
\end{theorem}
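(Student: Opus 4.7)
The plan is to insert the integro-differential solution $u^\vare$ of \eqref{eq:isaacseps} and its Galerkin projection $\calG_h u^\vare$ as intermediaries:
\[
\| \ue - u_h^\vare \|_{L^\infty(\Omega)} \leq \| \ue - u^\vare \|_{L^\infty(\Omega)} + \| u^\vare - \calG_h u^\vare \|_{L^\infty(\Omega)} + \| \calG_h u^\vare - u_h^\vare \|_{L^\infty(\Omega)}.
\]
The first term is bounded by $\vare^\sigma \|\fe\|_{C^{0,1}(\bar\Omega)}$ by Proposition~\ref{prop:CS}. For the second, I extend $u^\vare$ by zero to $\bar\Omega_\vare$; since $u^\vare$ vanishes on $\partial\Omega$, the extension is still in $C^{0,1}(\bar\Omega_\vare)$, and Proposition~\ref{prop:CS} together with a classical maximum principle for \eqref{eq:isaacseps} controls its Lipschitz norm by $\|\fe\|_{C^{0,1}(\bar\Omega)}$. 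The Galerkin estimate \eqref{eq:Galproj} then contributes $h|\log h|\|\fe\|_{C^{0,1}(\bar\Omega)}$, which is absorbed into $(h/\vare^2)|\log h|\|\fe\|_{C^{0,1}(\bar\Omega)}$ since $\vare$ is bounded.

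The third term is the main work. Set $e_h := \calG_h u^\vare - u_h^\vare \in \polV_h^0$. To apply the discrete ABP estimate of Lemma~\ref{lem:ABPh} I must convert the nonlinear error equation \eqref{eq:erreq} into a one-sided bound on $\LAP_h e_h$, and this is achieved on contact sets. Suppose $z \in \calC_h^-(e_h)$: there is $L \in \polP_1$ with $L(z) = e_h(z)$ and $L \leq -e_h^- \leq e_h$ on $B_R$, so $\frakd e_h(z, y) \geq \frakd L(z, y) = 0$ for every admissible $y$, and by linearity of $\Ie{\cdot}$ we have $\Ie{e_h}(z) \geq 0$ for all $\alpha \in \calA, \beta \in \calB$. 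Writing $\Ie{\calG_h u^\vare} = \Ie{u_h^\vare} + \Ie{e_h}$ and invoking Lemma~\ref{lem:infsup} I obtain
\[
\inf_\alpha \sup_\beta \Ie{\calG_h u^\vare}(z) - \inf_\alpha \sup_\beta \Ie{u_h^\vare}(z) \geq \inf_{\alpha,\beta} \Ie{e_h}(z) \geq 0,
\]
and substituting into \eqref{eq:erreq} at such $z$ yields $\LAP_h e_h(z) \leq (2/\lambda)\,\calR_{h,\vare}[u^\vare](z)$.

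I then pick $f_z := \max\{(2/\lambda)\calR_{h,\vare}[u^\vare](z), 0\}$ on $\calC_h^-(e_h)$ and any sufficiently large value off it, so that $\LAP_h e_h \leq f_z$ holds at every node. Lemma~\ref{lem:ABPh} combined with Lemma~\ref{lem:GvsI} (with $\varpi \equiv 0$ in the constant-coefficient setting) and $\sum_{z \in \Nh} |\omega_z| \lesssim |\Omega|$ gives $\sup_\Omega e_h^- \lesssim (h/\vare^2)|\log h|\,\|\fe\|_{C^{0,1}(\bar\Omega)}$. A symmetric argument using the concave envelope of $e_h$ and the upper nodal contact set, at which $\frakd e_h(z,\cdot) \leq 0$ forces $\sup_{\alpha,\beta}\Ie{e_h}(z) \leq 0$ and hence the reversed inequality $\LAP_h e_h(z) \geq (2/\lambda)\,\calR_{h,\vare}[u^\vare](z)$, yields the same bound for $\sup_\Omega e_h^+$. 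Adding the three pieces of the triangle inequality completes the proof.

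The main obstacle is the linearization step: recognizing that the contact-set geometry forces $\frakd e_h(z,\cdot)$ to have the favorable sign and thereby converts the difference of nonlinear inf-sup operators into a one-signed nonlocal correction, which can be discarded to produce the one-sided $\LAP_h$-inequality demanded by the discrete ABP. Once this observation is in place the remainder is a careful but routine combination of the already-established consistency (Lemma~\ref{lem:GvsI}) and stability (Lemma~\ref{lem:ABPh}) ingredients with Proposition~\ref{prop:CS}.
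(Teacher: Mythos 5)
Your proposal is correct and follows the same route as the paper: the identical three-term decomposition through $u^\vare$ and $\calG_h u^\vare$, the sign of $\frakd e_h(z,\cdot)$ at contact nodes from the (supporting affine function of the) convex envelope, the inf-sup monotonicity of Lemma~\ref{lem:infsup} to reduce the nonlinear error equation to $\frac{\lambda}{2}\LAP_h e_h(z)\leq\calR_{h,\vare}[u^\vare](z)$ on $\calC_h^-(e_h)$, and then the discrete ABP (Lemma~\ref{lem:ABPh}) together with Lemma~\ref{lem:GvsI} with $\varpi\equiv0$. The minor extra step of assigning $f_z$ off the contact set is a harmless formalization of how Lemma~\ref{lem:ABPh} is invoked.
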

\begin{proof}
We write
\[
  \| \ue - u_h^\vare \|_{L^\infty(\Omega)} \leq \| \ue - u^\vare \|_{L^\infty(\Omega)} + \| u^\vare - \calG_h u^\vare \|_{L^\infty(\Omega)} 
  + \| \calG_h u^\vare - u_h^\vare \|_{L^\infty(\Omega)},
\]
and we examine each term separately.

\noindent \boxed{1} By Proposition~\ref{prop:CS}, there is $\sigma>0$ such that
\[
  \| \ue - u^\vare \|_{L^\infty(\Omega)} \lesssim \vare^\sigma \| \fe \|_{C^{0,1}(\bar\Omega)}.
\]
  
\noindent \boxed{2} Estimate \eqref{eq:Galproj} on the Galerkin projection immediately yields
\[
   \| u^\vare - \calG_h u^\vare \|_{L^\infty(\Omega)} \lesssim h |\log h| \| \fe \|_{C^{0,1}(\bar\Omega)},
\]
where, using Proposition~\ref{prop:CS}, we bounded $\| u^\vare \|_{C^{0,1}(\bar\Omega)}$ by $\| \fe \|_{C^{0,1}(\bar\Omega)}$.

\noindent \boxed{3} Let us denote $e_h = \calG_h u^\vare - u_h^\vare \in \polV_h^0$. The definition of the convex envelope \eqref{convexenvelope} implies that
\[
  e_h(x) = \calG_hu^\vare(x) - u_h^\vare(x) \geq \Gamma (e_h) (x) \quad \forall x \in \Omega,
\]
and that if $z \in \calC_h^-(e_h)$ then $e_h(z) = \Gamma(e_h)(z)$.
Since $\Gamma (e_h)(x)$ is convex, we obtain $\frakd \calG_hu^\vare(z,y) \geq \frakd u_h^\vare(z,y)$. Consequently, for every $\alpha \in \calA$ and $\beta \in \calB$,
\[
  \Ie{\calG_h u^\vare}(z) \geq \Ie{ u_h^\vare}(z), \quad \forall z \in \calC_h^-(e_h),
\]
so that using the monotonicity property of the inf-sup operator \eqref{eq:infsup}, we have
\[
  \inf_{\alpha \in \calA}\sup_{\beta \in \calB} I^{\alpha,\beta}_\vare \calG_h u^\vare(z) \geq 
  \inf_{\alpha \in \calA}\sup_{\beta \in \calB} I^{\alpha,\beta}_\vare u_h^\vare(z),
  \quad \forall z \in \calC_h^-(e_h).
\]
In conclusion, at the nodal contact set $\calC^-(e_h)$ equation \eqref{eq:erreq} reduces to
\[
\frac\lambda2 \LAP_h e_h(z) \leq \calR_{h,\vare}[u^\vare](z).
\]
An application of the discrete ABP estimate of Lemma~\ref{lem:ABPh} then yields
\[
  \sup_\Omega e_h^- \lesssim \left( \sum_{z \in \calC^-(e_h)} |\calR_{h,\vare}[u^\vare](z)|^d |\omega_z| \right)^{1/d} \lesssim 
  \max_{z \in \Nh} |\calR_{h,\vare}[u^\vare](z)|.
\]
This yields a lower bound for $e_h$. An upper bound can be derived in a similar fashion by considering $- e_h$. Hence, we infer
\[
  \| \calG_h u^\vare - u_h^\vare \|_{L^{\infty}(\Omega)}  \lesssim \max_{z \in \Nh} |\calR_{h,\vare}[u^\vare](z)|.
\]
Using the fact that $u^\vare \in C^{0,1}(\bar\Omega)$ uniformly in $\vare$ we can invoke the bounds on $\calR_{h,\vare}[u^\vare](z)$ obtained in Lemma~\ref{lem:GvsI} which, recalling that $\varpi \equiv 0$, can be combined with the estimate of Proposition~\ref{prop:CS} to yield
\[
  \max_{z \in \Nh} |\calR_{h,\vare}[u^\vare](z)| \lesssim \frac{h}{\vare^2} |\log h| \| \fe \|_{C^{0,1}(\bar\Omega)}.
\]
Notice now that the choice of $\vare$ implies that $h|\log h|\vare^{-2} \to 0$ as $h \to 0$ and $\vare \to 0$. Therefore, eventually, we obtain
\[
  \| \calG_h u^\vare - u_h^\vare \|_{L^{\infty}(\Omega)}  \lesssim \frac{h}{\vare^2} |\log h| \| \fe \|_{C^{0,1}(\bar\Omega)} .
\]

Combining the estimates of these three steps yields the result.
\end{proof}

\begin{remark}[choice of $\vare$]
If one knows the value of $\sigma$ in Proposition~\ref{prop:CS}, setting $\vare^{\sigma+2} = h|\log h|$ yields an error estimate of the form 
\[
  \| \ue - u_h^\vare \|_{L^\infty(\Omega)} \lesssim h^{\frac\sigma{\sigma+2}} |\log h |^{\frac\sigma{\sigma+2}} \| \fe \|_{C^{0,1}(\bar\Omega)}.
\]
Notice that in the best case scenario, that is $\sigma = 1$, we would obtain a rate of convergence of order $\calO(h^{1/3}|\log h|^{1/3})$.
\end{remark}

\begin{remark}[explicit rate of convergence]
The rate of convergence given in Theorem~\ref{thm:rate} is given rather implicitly. It seems that this is a recurring feature in the literature; see for instance the main result in \cite{MR3355497}.
\end{remark}

\section{Implementation details}
\label{sec:implementation}
Let us discuss some details on how to actually implement and solve the nonlinear problem that scheme \eqref{eq:isaacsepsh} entails. We will first discuss how to obtain a solution to the nonlinear problem and then show how, through the use of quadrature, we can modify the scheme to make it amenable to implementation while not losing any of the properties that we have obtained in previous sections.

\subsection{The solution scheme}
\label{sub:solscheme}
The main difficulty in devising a convergent algorithm for the solution of \eqref{eq:isaacsepsh} is the fact that, due to the inf-sup operations, the underlying operator is neither convex nor concave. This is in sharp contrast with, for instance, the interpretation of Howard's algorithm \cite{MR0177813} as a semi-smooth Newton method described in \cite{JensenSmears13,MR3196952} for the Hamilton Jacobi Bellman since, as it is shown in \cite[Remark 5.3]{MR2551155} such a method may not converge.

On the other hand, \cite[Section 5]{MR2551155} presents a convergent generalization of Howard's algorithm for max-min problems, which we readily adapt here. We will present an algorithm which requires the solution, at every iteration step, of a Hamilton Jacobi Bellman equation, which can be realized via a semi-smooth Newton method. We will also comment on an algorithm with inexact solves.

For a given $w_h \in \polV_h^0$ and $z \in \N_h$ define $\alpha(w_h,z) \in \calA$ as the element that infimizes the supremum of the integral operators when applied to $w_h$ at the point $z$, that is
\[
  \inf_{\alpha \in \calA} \sup_{\beta \in \calB} I_{\vare}^{\alpha, \beta}[w_h](z) = \sup_{\beta \in \calB} I_{\vare}^{\alpha(w_h,z), \beta}[w_h](z).
\]
Set $\balpha(w_h) = \{\alpha(w_h,z): z \in \N_h \}$. For $v_h \in \polV_h^0$ define the operator $\frakF_{h,\vare}^{\balpha(v_h)}: \polV_h^0 \to \polV_h^0$ by
\begin{equation}
\label{eq:defofFa}
  \frakF_{h,\vare}^{\balpha(v_h)}[w_h](z) := \frac\lambda2 \LAP_h w_h(z) + \sup_{\beta \in \calB} I_\vare^{\alpha(v_h,z),\beta}[w_h](z).
\end{equation}
Our algorithm can then be described as follows:
\begin{enumerate}[$\bullet$]
  \item \textbf{Initialization}: Choose $w_h^{-1} \in \polV_h^0$ and set $\balpha_0 = \balpha(w_h^{-1})$.
  
  \item \textbf{Iteration}: For $k \geq 0$ find $w_h^k \in \polV_h^0$ that solves
  \begin{equation}
  \label{eq:wk}
    \frakF_{h,\vare}^{\balpha_k}[w_h^k](z) = f_{z}, \quad \forall z \in \N_h
  \end{equation}
  and set
  \begin{equation}
  \label{eq:balphak}
    \balpha_{k+1} = \balpha(w_h^k).
  \end{equation}
  
  \item \textbf{Convergence test}: If $\frakF_{h,\vare}[w_h^k](z) = f_{z}$ for all $z \in \N_h$ stop.
\end{enumerate}

Notice that \eqref{eq:balphak} is equivalent to
\[
  \frakF_{h,\vare}[w_h^k] = \frakF_{h,\vare}^{\balpha_{k+1}}[w_h^k].
\]
The analysis of the algorithm \eqref{eq:wk}--\eqref{eq:balphak} relies on the following properties of the operators $\frakF_{h,\vare}^\balpha$.

\begin{lemma}[monotonicity and comparison]
\label{lem:compare}
For every $\balpha \in \calA^{\#\N_h}$ the operator $\frakF_{h,\vare}^\balpha$ is monotone and satisfies a comparison principle, \ie if for $v_h, w_h \in \polV_h^0$ we have
\[
  \frakF_{h,\vare}^\balpha[v_h] \leq \frakF_{h,\vare}^\balpha[w_h],
\]
then $v_h \geq w_h$.
\end{lemma}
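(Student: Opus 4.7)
My plan is to mimic, in the discrete setting, the standard maximum principle argument for integro-differential operators, in the spirit of Lemma~\ref{lem:monotone} and the uniqueness part of Theorem~\ref{lem:existenceuniqueness}.

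\textbf{Monotonicity.} Given $v_h \leq w_h$ in $\polV_h^0$ with equality at $z \in \Nh$: weak acuteness of $\T_h$ gives $\LAP_h v_h(z) \leq \LAP_h w_h(z)$; the pointwise ordering combined with equality at $z$ (and the extension by zero outside $\Omega$) yields $\frakd v_h(z,y) \leq \frakd w_h(z,y)$ for every $y$, whence $I_\vare^{\balpha(z),\beta}[v_h](z) \leq I_\vare^{\balpha(z),\beta}[w_h](z)$ for every $\beta$; applying $\sup_\beta$ (monotone by the easy half of Lemma~\ref{lem:infsup} with $\calA$ replaced by a singleton) and adding the two contributions produces $\frakF_{h,\vare}^\balpha[v_h](z) \leq \frakF_{h,\vare}^\balpha[w_h](z)$.

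\textbf{Comparison.} Arguing by contradiction, assume $\frakF_{h,\vare}^\balpha[v_h] \leq \frakF_{h,\vare}^\balpha[w_h]$ on $\Nh$ and suppose $m := \max_{\bar\Omega}(w_h - v_h) > 0$. Since $v_h = w_h = 0$ on $\partial\Omega$, the maximum is attained at some interior node $z_0$. At $z_0$, running the monotonicity calculation in reverse -- using $w_h - v_h \leq m$ globally, the extension by zero, and the discrete maximum principle -- produces $\frakF_{h,\vare}^\balpha[w_h](z_0) \leq \frakF_{h,\vare}^\balpha[v_h](z_0)$, so every intermediate inequality must be an equality. Since $\calB$ is finite, selecting $\beta^\ast \in \calB$ that realizes the supremum of $I_\vare^{\balpha(z_0),\beta}[w_h](z_0)$ forces
\[
 I_\vare^{\balpha(z_0),\beta^\ast}[w_h - v_h](z_0) = 0 .
\]
The integrand here is non-positive, so $\frakd(w_h - v_h)(z_0, y) = 0$ for a.e.~$y$ in the (ellipsoidal, $O(\vare)$-diameter) support of $\varphi_{z_0,\vare}^{\balpha(z_0),\beta^\ast}$.

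\textbf{Propagation and conclusion.} This is the step I expect to be most delicate. For each $y$ where the second difference vanishes, either both $z_0 \pm y$ lie in $\bar\Omega$, in which case the global upper bound forces $(w_h - v_h)(z_0 \pm y) = m$; or at least one of them lies outside $\bar\Omega$, in which case the extension by zero makes the complementary point satisfy $(w_h - v_h) = 2m > m$, an immediate contradiction. Hence either the proof terminates at once, or the contact set $\calM := \{x \in \bar\Omega : (w_h - v_h)(x) = m\}$ contains an entire $O(\vare)$-neighborhood of $z_0$. The same argument can then be iterated at any new contact node in $\Nh \cap \calM$, since at such a node the scheme still yields $\frakF_{h,\vare}^\balpha[v_h] \leq \frakF_{h,\vare}^\balpha[w_h]$ and the value $m$ is still the global maximum. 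By quasi-uniformity each propagation step captures strictly more nodes of $\Nh$, and connectedness of $\Omega$ together with the boundedness of its diameter guarantees that after finitely many iterations $\calM$ reaches within $Q\vare$ of $\partial\Omega$, triggering the boundary alternative above and producing the desired contradiction.
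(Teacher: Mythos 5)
Your monotonicity argument coincides with the paper's. For the comparison principle you take a genuinely different route: the paper simply stipulates that $e_h = v_h - w_h$ attains a \emph{strict} minimum at an interior node $z$, so that $\LAP_h e_h(z) > 0$ and $I_\vare^{\alpha,\beta}[e_h](z) > 0$ strictly and the contradiction $\frakF_{h,\vare}^\balpha[v_h](z) > \frakF_{h,\vare}^\balpha[w_h](z)$ is immediate; you instead admit a non-strict extremum and try to propagate the contact set $\{w_h - v_h = m\}$, which in principle addresses a case the paper's argument leaves implicit. Your chain of inequalities forcing $\LAP_h(w_h - v_h)(z_0) = 0$ and $I_\vare^{\balpha(z_0),\beta^\ast}[w_h - v_h](z_0) = 0$ at a maximum node $z_0$ is correct, as is the deduction that $\frakd(w_h - v_h)(z_0, y) = 0$ for a.e.\ $y$ in the support of $\varphi_{z_0,\vare}^{\balpha(z_0),\beta^\ast}$ and the observation that $z_0 \pm y \notin \bar\Omega$ produces an immediate contradiction via the extension by zero.

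The propagation step, which you yourself flag as delicate, is not closed as written. First, the set forced into the contact region is not an ``$O(\vare)$-neighborhood of $z_0$'' but the ellipsoidal image $z_0 \pm \vare M^{\balpha(z_0),\beta^\ast}(z_0)\{\varphi > 0\}$, which need not contain a neighborhood of $z_0$ because $\varphi$ may vanish near the origin. Second, quasi-uniformity by itself does not produce new contact nodes: for $\vare \lesssim h$ that image can lie strictly inside a single element. The mechanism that actually advances the argument is piecewise linearity: once $w_h - v_h \equiv m$ on an open set, the affine restriction of $w_h - v_h$ to every element meeting that set must equal $m$, hence all vertices of those elements lie in the contact set; you never invoke this. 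Third, the assertion that after finitely many iterations the contact set comes within $Q\vare$ of $\partial\Omega$ is not justified; one must rule out the possibility that the captured elements form a proper subcomplex strictly inside $\Omega$ that absorbs its own shells. A cleaner way to close the argument uses the forced equality $\LAP_h(w_h - v_h)(z_0) = 0$ directly: for a weakly acute mesh this quantity is a nonnegative combination of the nonpositive numbers $(w_h - v_h)(y) - m$ over the nodes $y$ with $\int_\Omega \GRAD \phi_{z_0} \SCAL \GRAD \phi_y < 0$, so it forces $(w_h - v_h)(y) = m$ at each such $y$; iterating along the graph of strictly negative stiffness entries (assumed connected, a standard mild strengthening of weak acuteness) reaches a boundary node, where $w_h - v_h = 0 \neq m$.
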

\begin{proof}
Monotonicity can be established as in Lemma~\ref{lem:monotone}. Let us now establish the comparison principle. Define $e_h = v_h - w_h$ and assume that it has a strict minimum on $z \in \Nh$. This implies
\[
  \frac\lambda2 \LAP_h e_h (z) > 0, \qquad I_\vare^{\alpha,\beta}[e_h](z) > 0,
\]
for every $\alpha \in \calA$, $\beta \in \calB$. Consequently,
\[
  \frac\lambda2 \LAP_h e_h(z) + \frac\lambda2 \LAP_h w_h(z) + I_\vare^{\alpha,\beta}[w_h] (z) < 
  \frac\lambda2 \LAP_h v_h(z) + I_\vare^{\alpha,\beta}[v_h](z),
\]
this readily implies that
\[
  \frac\lambda2 \LAP_h e_h(z) + \frac\lambda2 \LAP_h w_h(z) + \sup_{\beta \in \calB} I_\vare^{\alpha,\beta}[w_h](z) \leq
  \frac\lambda2 \LAP_h v_h(z) + \sup_{\beta \in \calB} I_\vare^{\alpha,\beta}[v_h](z)
\]
or that, at $z$,
\[
  \frakF_{h,\vare}^\balpha[w_h](z) < \frakF_{h,\vare}^\balpha[v_h](z).
\]

In conclusion, $e_h$ cannot have a minimum in the interior, but since $e_h = 0$ on $\partial\Omega$, we must have that $v_h \geq w_h$.
\end{proof}

The comparison principle will allow us to obtain convergence.

\begin{theorem}[convergence]
\label{thm:convergenceHoward}
The sequence $\{w_h^k\}_{k\geq0} \subset \polV_h^0$ obtained by algorithm \eqref{eq:wk}--\eqref{eq:balphak} converges in a finite number of steps to $u_h^\vare \in \polV_h^0$, solution of \eqref{eq:isaacsepsh}.
\end{theorem}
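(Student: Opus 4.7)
The plan is to follow Howard's classical argument adapted to the max--min setting: I would first show that $\{w_h^k\}_{k\geq 0}$ forms a monotone decreasing sequence in $\polV_h^0$, then deduce from the finiteness of the feedback set $\calA^{\#\Nh}$ that the sequence must stabilize after finitely many iterations, and finally identify the stabilized value with $u_h^\vare$.

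For monotonicity, the defining property of $\balpha_{k+1} = \balpha(w_h^k)$ as an infimizer, combined with \eqref{eq:wk}, gives
\[
\frakF_{h,\vare}^{\balpha_{k+1}}[w_h^k](z) = \frakF_{h,\vare}[w_h^k](z) \leq \frakF_{h,\vare}^{\balpha_k}[w_h^k](z) = f_z,
\]
while $w_h^{k+1}$ is constructed so that $\frakF_{h,\vare}^{\balpha_{k+1}}[w_h^{k+1}](z) = f_z$. Combining these yields $\frakF_{h,\vare}^{\balpha_{k+1}}[w_h^k] \leq \frakF_{h,\vare}^{\balpha_{k+1}}[w_h^{k+1}]$, and Lemma~\ref{lem:compare} applied to the operator $\frakF_{h,\vare}^{\balpha_{k+1}}$ then gives $w_h^k \geq w_h^{k+1}$.

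For finite termination, I would use that $\calA$ is finite and $\Nh$ contains only finitely many nodes, so $\calA^{\#\Nh}$ is a finite set. The comparison principle of Lemma~\ref{lem:compare} ensures that for every fixed $\balpha$ the subproblem \eqref{eq:wk} has at most one solution, so the map $\balpha \mapsto w_h$ has finite range and the monotone sequence $\{w_h^k\}$ visits only finitely many distinct elements of $\polV_h^0$. A monotone sequence in a finite set must become constant, so there is $K$ with $w_h^K = w_h^{K+1}$. Using $\balpha_{K+1} = \balpha(w_h^K)$ and the identity $\frakF_{h,\vare}^{\balpha(v)}[v] = \frakF_{h,\vare}[v]$, this stabilization yields
\[
\frakF_{h,\vare}[w_h^K](z) = \frakF_{h,\vare}^{\balpha_{K+1}}[w_h^K](z) = \frakF_{h,\vare}^{\balpha_{K+1}}[w_h^{K+1}](z) = f_z,
\]
so that $w_h^K$ solves \eqref{eq:isaacsepsh} and therefore coincides with $u_h^\vare$ by the uniqueness part of Theorem~\ref{lem:existenceuniqueness}, which also incidentally delivers Remark~\ref{rem:otherex}.

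The only nontrivial point, and thus the main obstacle, is that the algorithm presupposes the existence of each iterate $w_h^k$, i.e., well-posedness of the Hamilton--Jacobi--Bellman subproblem \eqref{eq:wk} in which the $\alpha$-control is frozen. Uniqueness is immediate from Lemma~\ref{lem:compare}; for existence, one may either repeat the discrete Perron construction used in Theorem~\ref{lem:existenceuniqueness} (that argument only uses monotonicity together with the consistency estimate of Lemma~\ref{lem:GvsI}, both of which survive the freezing of $\alpha$), or run an inner Howard iteration in $\beta$ as in \cite[Section~5]{MR2551155}, whose monotone convergence follows by exactly the reasoning outlined above.
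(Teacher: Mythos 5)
Your proof is correct and follows essentially the same route as the paper: monotonicity of the iterates via the infimizing property of $\balpha(w_h^k)$ together with the comparison principle of Lemma~\ref{lem:compare}, finite termination from the finiteness of $\calA^{\#\Nh}$ combined with uniqueness of each frozen-$\alpha$ subproblem, and identification of the limit with $u_h^\vare$ via the uniqueness part of Theorem~\ref{lem:existenceuniqueness}. Your additional remark that well-posedness of each iterate \eqref{eq:wk} must itself be justified (either by the Perron construction applied to the frozen-$\alpha$ operator or by an inner Howard loop in $\beta$) is a legitimate point that the paper's proof passes over silently; it does not change the argument but does make it more self-contained.
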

\begin{proof}
We proceed in two steps:
\begin{enumerate}[1.]
  \item We show that the sequence is monotone, \ie for every $z \in \N_h$, $w_h^k(z) \geq w_h^{k+1}(z)$. By construction:
  \[
    \frakF_{h,\vare}^{\balpha_{k+1}}[w_h^k](z) = \frakF_{h,\vare}[w_h^k](z) \leq \frakF_{h,\vare}^{\balpha_k}[w_h^k](z).
  \]
  Subtracting $f_{z}$ from this inequality we realize that
  \[
    \frakF_{h,\vare}^{\balpha_{k+1}}[w_h^k](z) - f_{z} \leq \frakF_{h,\vare}^{\balpha_k}[w_h^k](z) - f_{z} = 0 = 
    \frakF_{h,\vare}^{\balpha_{k+1}}[w_h^{k+1}](z) - f_{z},
  \]
  which by the comparison principle established in Lemma~\ref{lem:compare} implies $w_h^k \geq w_h^{k+1}$.
  
  \item Since the set $\calA$ is finite, there are at most $(\# \calA)^{\# \N_h}$ different variables and there must be two indices $\kappa> \ell$ for which \eqref{eq:balphak} yields $\balpha_\kappa = \balpha_\ell$. This implies that $w_h^\kappa = w_h^\ell$ and by monotonicity $w_h^k = w_h^\ell$ for all $k\geq l$. But then, by uniqueness $w_h^\ell = u_h^\vare$.
  
\end{enumerate}
This concludes the proof.
\end{proof}

Notice that \eqref{eq:wk} requires the exact solution of a discrete version of a Hamilton Jacobi Bellman problem, which can be achieved by Howard's algorithm. We also propose a scheme with inexact solves in this step:
\begin{enumerate}[$\bullet$]
  \item \textbf{Initialization}: Choose $w_h^{-1} \in \polV_h^0$ and set $\balpha_0 = \balpha(w_h^{-1})$.
  
  \item \textbf{Iteration}: For $k \geq 0$ find $w_h^k \in \polV_h^0$ such that
  \begin{equation}
  \label{eq:wk2}
    \max_{z \in \N_h} \left | \frakF_{h,\vare}^{\balpha_k}[w_h^k](z) - f_{z} \right | < \eta_k
  \end{equation}
  and set
  \begin{equation}
  \label{eq:balphak2}
    \balpha_{k+1} = \balpha(w_h^k).
  \end{equation}
  
  \item \textbf{Convergence test}: If $\frakF_{h,\vare}[w_h^k](z) = f_{z}$ for all $z \in \N_h$ stop.
\end{enumerate}

The convergence of this algorithm follows \emph{mutatis mutandis} the proof of Theorem~5.4 of \cite{MR2551155}.

\begin{theorem}[convergence with inexact solves]
\label{thm:conv}
Assume that the sequence of errors $\{\eta_k\}_{k \in \polN} \in \ell^1$. Then the sequence $\{w_h^k\}_{k\geq0} \subset \polV_h^0$, obtained by algorithm \eqref{eq:wk2}--\eqref{eq:balphak2}, converges to $u_h^\vare \in \polV_h^0$, solution of \eqref{eq:isaacsepsh}.
\end{theorem}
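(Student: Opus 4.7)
The plan is to mirror the proof of Theorem~\ref{thm:convergenceHoward}, replacing the exact monotonicity of the iterates by a quantitative \emph{almost-monotonicity} extracted from the discrete ABP estimate. In the exact case the comparison principle of Lemma~\ref{lem:compare} delivered $w_h^k \geq w_h^{k+1}$; since each iterate now solves its Bellman subproblem only up to a residual of size $\eta_k$, the analogous bound should degrade to $w_h^{k+1} \leq w_h^k + c(\eta_k + \eta_{k+1})$ in $L^\infty(\Omega)$, after which the hypothesis $\{\eta_k\} \in \ell^1$ makes the cumulative defect summable and forces a limit.

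\textbf{Step 1: Quantitative almost-monotonicity.} Setting $e_h := w_h^k - w_h^{k+1} \in \polV_h^0$, I would combine the identity $\frakF_{h,\vare}[w_h^k] = \frakF^{\balpha_{k+1}}_{h,\vare}[w_h^k]$ (by definition of $\balpha_{k+1}$) with the inequalities $\frakF_{h,\vare}[w_h^k] \leq \frakF^{\balpha_k}_{h,\vare}[w_h^k] < f_z + \eta_k$ and $\frakF^{\balpha_{k+1}}_{h,\vare}[w_h^{k+1}] > f_z - \eta_{k+1}$ supplied by \eqref{eq:wk2} at steps $k$ and $k+1$, to arrive at
\[
 \frakF^{\balpha_{k+1}}_{h,\vare}[w_h^k](z) - \frakF^{\balpha_{k+1}}_{h,\vare}[w_h^{k+1}](z) < \eta_k + \eta_{k+1}.
\]
Imitating Step~\boxed{3} of the proof of Theorem~\ref{thm:rate}, I would then localize to the nodal contact set $\calC_h^-(e_h)$: there convexity of $\Gamma(e_h)$ forces $\frakd e_h(z,y) \geq 0$, and Lemma~\ref{lem:infsup} applied to the sup over $\beta$ with $\alpha = \alpha_{k+1}(z)$ fixed shows that the integral contributions cancel, reducing the preceding inequality to $\tfrac{\lambda}{2}\LAP_h e_h(z) < \eta_k + \eta_{k+1}$ on $\calC_h^-(e_h)$. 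The discrete ABP estimate of Lemma~\ref{lem:ABPh} then yields $\sup_\Omega e_h^- \lesssim \eta_k + \eta_{k+1}$, i.e.\ the pointwise bound $w_h^{k+1} \leq w_h^k + C(\eta_k + \eta_{k+1})$.

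\textbf{Step 2: Existence and identification of the limit.} With $t_k := C\sum_{j \geq k}(\eta_j + \eta_{j+1})$, finite and tending to zero because of the $\ell^1$ hypothesis, the corrected sequence $v_h^k := w_h^k + t_k$ is node-wise monotone decreasing. A direct application of the same contact-set/ABP argument to $w_h^k$ against zero, starting from $\frakF^{\balpha_k}_{h,\vare}[w_h^k] = f_z + O(\eta_k)$, supplies a uniform $L^\infty$-bound on the iterates, so $v_h^k$ is bounded below and converges pointwise on the finite set $\Nh$ to some $w_h^\infty \in \polV_h^0$; therefore $w_h^k \to w_h^\infty$ as well. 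To identify the limit I would couple the identity $\frakF_{h,\vare}[w_h^k] = \frakF^{\balpha_{k+1}}_{h,\vare}[w_h^k]$ with the bound $\frakF^{\balpha_{k+1}}_{h,\vare}[w_h^{k+1}] = f_z + O(\eta_{k+1})$, using that $\frakF^\balpha_{h,\vare}$ is Lipschitz in its argument uniformly over the finite family of $\balpha$'s, to conclude
\[
 |\frakF_{h,\vare}[w_h^k](z) - f_z| \lesssim \|w_h^k - w_h^{k+1}\|_{L^\infty(\Omega)} + \eta_{k+1} \longrightarrow 0.
\]
Continuity of $\frakF_{h,\vare}$ and uniqueness of the discrete solution (Theorem~\ref{lem:existenceuniqueness}) then deliver $\frakF_{h,\vare}[w_h^\infty] = f$, and therefore $w_h^\infty = u_h^\vare$.

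\textbf{Main obstacle.} The delicate part is Step~1: Lemma~\ref{lem:compare} cannot be invoked directly, because the equations for $w_h^k$ and $w_h^{k+1}$ involve different Bellman indices $\balpha_k,\balpha_{k+1}$ and are only satisfied up to residuals. One therefore needs a \emph{quantitative} comparison principle that turns the pointwise defect $\eta_k + \eta_{k+1}$ into an $L^\infty$-bound on $w_h^k - w_h^{k+1}$, and this is precisely where the contact-set mechanism of Theorem~\ref{thm:rate} together with the discrete ABP estimate of Lemma~\ref{lem:ABPh} must be used: by localizing to nodes where the nonlinear sup-operator collapses onto the bare discrete Laplacian, the nonlinear errors become tractable and the $\ell^1$ summability of $\{\eta_k\}$ can do its work. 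Without such a mechanism, the accumulated nonlinear defect could in principle grow with $k$ even under $\sum_k \eta_k < \infty$.
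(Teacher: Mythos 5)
The paper provides no proof of its own --- it defers entirely to Theorem~5.4 of \cite{MR2551155} --- so there is no in-text argument to compare against; your write-up is a correct, self-contained realization of the claimed \emph{mutatis mutandis}. The $L^\infty$ stability for Bellman subproblems used in the reference is replaced here, appropriately, by the discrete ABP estimate of Lemma~\ref{lem:ABPh}, and the contact-set localization borrowed from the proof of Theorem~\ref{thm:rate} is exactly the right device for converting the residual defect $\eta_k + \eta_{k+1}$ into the one-sided bound $w_h^{k+1} \leq w_h^k + C(\eta_k + \eta_{k+1})$. In Step~1 the chain of inequalities from \eqref{eq:wk2} at steps $k$ and $k+1$, together with $\frakF_{h,\vare}[w_h^k]=\frakF^{\balpha_{k+1}}_{h,\vare}[w_h^k]$ and $\frakF_{h,\vare}\leq\frakF^{\balpha_k}_{h,\vare}$, is correct; on $\calC_h^-(e_h)$ convexity of $\Gamma(e_h)$ forces $\frakd e_h(z,y)\ge 0$, so the $\sup_\beta$ contributions in $\frakF^{\balpha_{k+1}}_{h,\vare}$ enter with a favorable sign (they do not ``cancel,'' but they can be dropped from the lower bound), reducing the inequality to $\frac\lambda2\LAP_h e_h(z) < \eta_k + \eta_{k+1}$, to which Lemma~\ref{lem:ABPh} applies exactly as in the proof of Theorem~\ref{thm:rate}. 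Step~2's tail correction $t_k = C\sum_{j\ge k}(\eta_j + \eta_{j+1})$ is finite and vanishes precisely because $\{\eta_k\}\in\ell^1$, producing a monotone bounded sequence on the finite nodal set $\Nh$, and the identification of the limit with $u_h^\vare$ via Lipschitz continuity of the finitely many operators $\frakF^{\balpha}_{h,\vare}$ and uniqueness (Theorem~\ref{lem:existenceuniqueness}) is sound. This is indeed the structure of the proof of Theorem~5.4 in \cite{MR2551155}, transported to the finite element setting using the tools of this paper.
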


\subsection{A numerical scheme with quadrature}
\label{sub:quadrature}

One of the most difficult aspects in the implementation of the scheme developed in \cite{RHNWZ} and, as a consequence, of that developed here is the computation of the integral operators. In an attempt to simplify this, here we consider a scheme with quadrature. To focus on the essential difficulties, we consider in what follows the linear problem
\begin{equation}
\label{eq:lineq}
  \frakL[\ve](x) := A(x):\D^2 \ve(x) = \fe(x) \ \text{in } \Omega, \quad
  \ve = 0 \ \text{on } \partial\Omega,
\end{equation}
with $A \in C^{0,1}(\bar\Omega,\GL_d(\Real))$ symmetric and elliptic in the sense of \eqref{eq:ellipticity}. Notice that if $\# \calA = \# \calB = 1$ problem \eqref{eq:isaacs} reduces to this one. For this reason, as proposed and analyzed in \cite{RHNWZ}, this problem can be approximated by the integro-differential one
\begin{equation}
\label{eq:lineqeps}
  \frakL^\vare[v^\vare](x) := \frac\lambda2 \LAP v^\vare(x) + I_\vare [v^\vare](x) = \fe(x) \ \text{in } \Omega, \quad
  v^\vare = 0 \ \text{on } \partial\Omega,
\end{equation}
where $I_\vare$ is defined as in \eqref{eq:defofI}. In turn, this problem can be discretized by
\begin{equation}
\label{eq:lineqepsh}
  \frakL^\vare_h[v_h^\vare](z) := \frac\lambda2 \LAP_h v_h^\vare(z) + I_\vare [v_h^\vare](z) = f_{z} \quad \forall z \in \Nh.
\end{equation}
The properties of the scheme \eqref{eq:lineqepsh} have been established in \cite{RHNWZ}. Notice, however, that its implementation requires the exact evaluation of the integral operator at each interior node, which can be very costly. For this reason here we instead consider a scheme with quadrature.

Assume that we have at hand a quadrature formula for the unit ball $B_1$
\[
  \int_{B_1} w(x) \diff x \approx \sum_{j=1}^\nu \omega_j w(\xi_j)
\]
for some nodes $\{\xi_j\}_{j=1}^\nu \subset B_1$ and weights $\{\omega_j\}_{j=1}^\nu \subset \Real^+$ that is exact for $\polP_m$, polynomials of degree $m$. The construction of quadrature formulas of this form which are of maximal degree of precision can be found in classical references like \cite{MR656522}, see also \cite{MR1642507,MR1753510,MR2028042}.

Now, since $\varphi$ is symmetric and compactly supported in $B_1$, applying the change of variables $y = \vare M(x) \zeta$ to \eqref{eq:defofI} yields
\[
  I_\vare[ w](x) = \frac2{\vare^2} \int_{B_1} \left( w(x+\vare M(x) \zeta) - w(x) \right) \varphi(\zeta) \diff \zeta
\]
to which we can apply the quadrature formula and obtain
\begin{equation}
\label{eq:defofIh}
  I_{\vare,h} [w](x) = \frac2{\vare^2} \sum_{j=1}^\nu \omega_j w(x + \vare M(x) \xi_j ) \varphi(\xi_j) - \frac{\kappa_d}{\vare^2} w(x),
\end{equation}
for a constant $\kappa_d$ that depends only on the dimension. With this approximation at hand we define the following scheme
\begin{equation}
\label{eq:lineqepshquad}
  \bar{\frakL}^\vare_h[\tilde{v}_h^\vare](z) := \frac\lambda2 \LAP_h \tilde{v}_h^\vare(z) + I_{\vare,h}[ \tilde{v}_h^\vare](z) = f_{z} \quad 
  \forall z \in \Nh.
\end{equation}

\begin{remark}[implementation]
\label{rem:implem}
Notice that expanding $\tilde{v}_h^\vare = \sum_{y \in \Nh} V_{y} \phi_{y}$ in the nodal basis transforms \eqref{eq:lineqepshquad} into
\[
  \sum_{y \in \Nh} V_{y} \left( \frac\lambda2 \LAP_h \phi_{y}(z) + I_{\vare,h}[\phi_{y}](z) \right) = f_{z}
\]
so that to obtain the system matrix one only needs to apply the quadrature formula \eqref{eq:defofIh} to the nodal basis functions. The definition of $I_{\vare,h}[\cdot]$ yields
\[
  I_{\vare,h}[\phi_{y}](z) = \frac2{\vare^2} \sum_{j=1}^\nu \omega_j \varphi(\xi_j) \phi_{y}(z + \vare M(z) \xi_j)
  - \frac{\kappa_d}{\vare^2} \delta_{z,y}.
\]
The terms $\omega_j \varphi(\xi_j)$ are independent of $z$ and $y$ and thus can be precomputed. On the other hand, we only need to evaluate the basis function $\phi_{y}$ at the points such that
\[
  z + \vare M(z) \xi_j \in \supp \phi_{y}.
\]
\end{remark}

We now discuss the effect of quadrature in the convergence properties of scheme \eqref{eq:lineqepshquad}. A useful characterization of smoothness is that $w \in C^{2,s}(\bar{B}_t)$ whenever there exists a paraboloid $P \in \polP_2$ such that
\[
  \| w - P \|_{L^\infty(\bar{B}_t)} \lesssim t^{2+s}.
\]
This immediately yields that, provided the quadrature formula is exact for quadratics, we have for $w \in C^{2,s}(\bar\Omega)$
\begin{equation}
\label{eq:convquad}
  \| I_\vare w - I_{\vare,h} w \|_{L^\infty(\Omega)} \lesssim \vare^s,
\end{equation}
because we approximate the function locally by a quadratic over the domain of integration, which for a point $x\in \Omega$ is contained in $x+B_\vare$. These properties allow us to establish convergence and, under smoothness assumptions, rates. For brevity we only present one result below.

\begin{theorem}[convergence of the scheme with quadrature]
\label{thm:convquad}
Assume that $\ve$, solution of \eqref{eq:lineq} is such that $\ve \in C^{2,s}(\bar\Omega)$. If $\tilde{v}_h^\vare \in \polV_h^0$ solves \eqref{eq:lineqepshquad}, then
\[
  \| \ve - \tilde{v}_h^\vare \|_{L^\infty(\Omega)} \lesssim \vare^s + \frac{h}{\vare^2} |\log h|,
\]
where the hidden constant is independent of $\vare$ and $h$.
\end{theorem}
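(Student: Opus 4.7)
The plan is to mirror the three-term decomposition used in Theorem~\ref{thm:rate}, adapted to the linear setting and with careful attention to the additional error introduced by the quadrature rule. Write
\[
  \|\ve - \tilde v_h^\vare\|_{L^\infty(\Omega)} \leq \|\ve - v^\vare\|_{L^\infty(\Omega)} + \|v^\vare - \calG_h v^\vare\|_{L^\infty(\Omega)} + \|\calG_h v^\vare - \tilde v_h^\vare\|_{L^\infty(\Omega)},
\]
and treat each summand in turn. Since $\ve \in C^{2,s}(\bar\Omega)$ and the support of $\varphi_{x,\vare}$ lies in $B_{Q\vare}$, Taylor's formula yields $\frakd \ve(x,y) = \D^2 \ve(x):(y\otimes y) + O(|y|^{2+s})$, so $I_\vare[\ve](x) = (A(x) - \tfrac{\lambda}{2} I):\D^2\ve(x) + O(\vare^s)$ and hence $\|\frakL^\vare[\ve] - \fe\|_{L^\infty(\Omega)} \lesssim \vare^s$. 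A comparison principle for the uniformly elliptic operator $\frakL^\vare$ then gives $\|\ve - v^\vare\|_{L^\infty(\Omega)} \lesssim \vare^s$. The second summand is controlled by $h|\log h|$ through the Galerkin estimate \eqref{eq:Galproj} combined with the $C^{0,1}$-bound on $v^\vare$ coming from (the linear specialisation of) Proposition~\ref{prop:CS}.

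The heart of the argument is the third summand. Set $e_h = \calG_h v^\vare - \tilde v_h^\vare \in \polV_h^0$. Testing \eqref{eq:lineqeps} with $\phi_z$, dividing by $\int_\Omega \phi_z$, and subtracting from \eqref{eq:lineqepshquad}, I would obtain
\[
  \tfrac{\lambda}{2}\LAP_h e_h(z) + I_{\vare,h}[e_h](z) = R_z := \calR_{h,\vare}[v^\vare](z) + (I_{\vare,h} - I_\vare)[\calG_h v^\vare](z),
\]
where $\calR_{h,\vare}$ is the linear analogue of \eqref{eq:defofcalR}. By Lemma~\ref{lem:GvsI}, the first piece of $R_z$ is $O(h|\log h|/\vare^2)$. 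For the quadrature piece, decompose
\[
  (I_{\vare,h} - I_\vare)[\calG_h v^\vare] = (I_{\vare,h} - I_\vare)[v^\vare] + (I_{\vare,h} - I_\vare)[\calG_h v^\vare - v^\vare];
\]
the first term is $O(\vare^s)$ by \eqref{eq:convquad} (applicable because $v^\vare \in C^{2,s}$), while the second is controlled via the trivial bounds $\|I_\vare w\|_{L^\infty},\,\|I_{\vare,h} w\|_{L^\infty}\lesssim \vare^{-2}\|w\|_{L^\infty}$ and the Galerkin estimate, yielding a contribution of order $h|\log h|/\vare^2$.

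Finally, I would apply the discrete ABP argument of Step~3 of Theorem~\ref{thm:rate} to both $e_h$ and $-e_h$. At each nodal contact point of $e_h$, the second differences are nonnegative, and since the quadrature rule is symmetric (nodes pair as $\pm\xi_j$) one has $I_{\vare,h}[e_h](z)\geq 0$; hence $\tfrac{\lambda}{2}\LAP_h e_h(z) \leq R_z$ on the contact set. Lemma~\ref{lem:ABPh} then yields $\|e_h\|_{L^\infty(\Omega)} \lesssim \max_{z \in \Nh} |R_z| \lesssim \vare^s + h|\log h|/\vare^2$, and combining the three bounds gives the stated estimate. The principal obstacle is the quadrature/Galerkin interaction inside the third summand: the bound \eqref{eq:convquad} requires $C^{2,s}$-regularity, so one must route through $v^\vare$ and then pay a $\vare^{-2}$ factor to absorb $\calG_h v^\vare - v^\vare$; the decomposition above is designed precisely to keep this detour within the claimed rate.
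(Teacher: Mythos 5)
Your argument is correct and follows the same route the paper intends: the three-term split $\ve \to v^\vare \to \calG_h v^\vare \to \tilde v_h^\vare$, the error equation for $e_h = \calG_h v^\vare - \tilde v_h^\vare$, and the discrete ABP estimate at the nodal contact set. The paper's own proof is essentially a one-line pointer ("follow the arguments of Theorem~\ref{thm:rate}, combined with \eqref{eq:convquad}"), so what you have done is supply the missing details, and you have done so correctly. Two of those details deserve emphasis. First, your decomposition $(I_{\vare,h}-I_\vare)[\calG_h v^\vare] = (I_{\vare,h}-I_\vare)[v^\vare] + (I_{\vare,h}-I_\vare)[\calG_h v^\vare - v^\vare]$ is exactly the right way to reconcile the fact that \eqref{eq:convquad} needs $C^{2,s}$-regularity (available for $v^\vare$, not $\calG_h v^\vare$) with the fact that the difference $\calG_h v^\vare - v^\vare$ can only be priced at $\vare^{-2}\|\cdot\|_{L^\infty}$, producing the $h|\log h|/\vare^2$ term; this is the genuine subtlety and you identified it. Second, you correctly flag that the ABP step needs $I_{\vare,h}[e_h](z) \geq 0$ at contact points, which is \emph{not} automatic from the definition \eqref{eq:defofIh} in terms of one-sided differences. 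Your symmetry hypothesis (nodes paired as $\pm\xi_j$ with equal weights) is one sufficient condition; a slightly weaker sufficient condition is that the quadrature annihilates the first moment $\sum_j \omega_j \xi_j \varphi(\xi_j) = 0$, so that $I_{\vare,h}[e_h](z) \geq \frac{2}{\vare}\,\GRAD L \cdot M(z)\sum_j \omega_j \xi_j \varphi(\xi_j) = 0$ with $L$ a supporting plane of $\Gamma(e_h)$ at $z$. Either way, this is an extra structural hypothesis on the quadrature rule that the paper's statement does not spell out but implicitly relies on; noticing it is a point in your favor, not a gap.
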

\begin{proof}
As it was shown in \cite{MR2667633,RHNWZ}, if $\ve \in C^{2,s}(\bar\Omega)$ the same holds for $v^\vare$, solution of \eqref{eq:lineqeps} and, more importantly,
\[
  \| \ve - v^\vare \|_{L^\infty(\Omega)} \lesssim \vare^s.
\]

It remains then to compare $\tilde{v}_h^\vare$ and $v^\vare$. This can be accomplished by following the arguments we presented in Theorem~\ref{thm:rate},  combined with estimate \eqref{eq:convquad} and the techniques presented in \cite{RHNWZ}.
\end{proof}

\bibliographystyle{plain}
\bibliography{biblio}

\end{document}